\newtheorem{theorem}{Theorem}[section]
\newtheorem{rem}[theorem]{Remark}
\newtheorem{lemma}[theorem]{Lemma}
\newtheorem{corollary}[theorem]{Corollary}
\def\det{\mathop{\rm det}\nolimits}
\def\Ricci{\mathop{\rm Ricci}\nolimits}
\def\dbar{\overline\partial}
\def\ddbar{\partial\overline\partial}
\def\cG{{\mathcal G}}
\def\cE{{\mathcal E}}
\def\cC{{\mathcal C}}
\def\cM{{\mathcal M}}
\def\cK{{\mathcal K}}
\let\ol=\overline
\let\ep=\varepsilon
\def\bC{{\mathbb C}}
\def\bR{{\mathbb R}}
\begin{document}
\title[Remark]{
Approximation of weak geodesics and \\
subharmonicity of Mabuchi energy}

\author{XiuXiong Chen, Long Li, Mihai P\u aun}

\address{Department of Mathematics\\
 Stony Brook University \\
 NY, USA.} 
 \email{xiu@math.sunysb.edu}

\address{Department of Mathematics and Statistics\\
McMaster University\\
1280 Main Street West\\
Hamilton, ON  L8S 4K1\\
Canada.}
\email{lilong@math.mcmaster.ca}

\address{Korea Institute for Advanced Study \\
School of Mathematics\\
85 Hoegiro, Dongdaemun-gu, Seoul 130-722, Korea.}
\email{paun@kias.re.kr}

\maketitle

\section{Introduction}

In a recent paper \cite{BB}, R. Berman and B. Berndtsson established the convexity of the Mabuchi energy functional
$\cM$ along the so-called \emph{weak geodesics}, answering (affirmatively) to a conjecture proposed 
by the first named author of this article. 
Given two K\"ahler metrics in the same cohomology class, it is well-known (cf. \cite{lempert1}, \cite{lempert2}) that in general one cannot find a
\emph{smooth geodesic} connecting them: this is a major source of difficulties while dealing e.g. with the aforementioned convexity question.

In this article we explore in a systematic way two techniques of approximation of weak geodesics.
The first and most natural one is given by the \emph{$\ep$-geodesics}, obtained in \cite{XX}. The second one
consists in using a fiberwise approximation of weak geodesics via a family of well-chosen Monge-Amp\`ere 
equations. A corollary of the second technique is an alternative 
proof of the result in \cite{BB}. 
Roughly speaking, our proof can be seen as a ``global version" of the 
local Bergman kernel arguments, so morally we follow the original ideas of \cite{BB}; nevertheless, we feel that
our approach might be useful in other contexts. For example, the method we are using here allows us to establish 
the convexity of $\cM$ more \emph{directly} than in the original article, where the 
first step is to show convexity of $\cM$ in the sense of distributions. 

We equally infer 
that the 
Mabuchi functional is \emph{continuous} up to the boundary when 
evaluated on a weak geodesic. The proof of this second statement is based on 
semi-continuity properties of the entropy functional.

Another theorem we will establish here is the almost-convexity of the regularized 
Mabuchi energy along the $\ep$-geodesics cf. \cite{XX}. Actually, our hope is that this latter 
result could be also used in order to provide a proof of the convexity of $\cM$ along weak geodesics.
We refer to the comments at the end of this note for further support concerning this belief.

This article is organized as follows. 
We start by recalling the important result of Xiuxiong Chen in \cite{XX} concerning the existence of 
$\cC^{1,1}$ solutions of the MA equation describing the geodesic between two K\"ahler 
metrics. After a preliminary discussion about the strategy of the proof,
the convexity and the continuity of $\cM$ are obtained in section 4 via the approximation procedure mentioned above.
Finally, the convexity of $\cM$ along $\ep$-geodesics and some other results/expectations 
are treated in section 5.

\section{Geodesics}\label{geo}

Let $X$ be a compact K\"ahler manifold; we denote by $\cK$ its K\"ahler cone.
Let $\{\omega\}\in \cK$ be a K\"ahler class of $X$; the notation above means
that the representative $\omega$ is non-singular and definite positive. Let $\omega_0, \omega_1\in \{\omega\}$ be two positive definite 
representatives of the same cohomology class. A \emph{weak geodesic} between $\omega_0$ and $\omega_1$ is a semi-positive definite
(1,1)-current 
\begin{equation}\label{equa1}  
\cG:= \omega+ dd^c\varphi
\end{equation}
on the product $X\times \Sigma$ of the manifold $X$ with the annulus $\Sigma\subset \bC$, such that the following requirements are 
satisfied.

\begin{enumerate}

\item[(a)] The function $\varphi$ is $\cC^{1,1}$ on $X\times \Sigma$; in particular, the coefficients of $\cG$ are bounded.
\smallskip

\item[(b)] We have 
$\displaystyle \cG^{n+1}= 0.$
\smallskip

\item[(c)] The current $\cG$ is rotationally invariant, and it equals
$\omega_0$ and $\omega_1$ on the boundary of $\Sigma$, respectively. 
\end{enumerate}

The existence of $\cG$ with the properties stated above was first established in \cite{XX}, together with important complements in \cite{Blocki}.

\section{What is to be proved}\label{what}

Let $u$ be a smooth function on $X$, such that $\omega_u:= \omega+ dd^cu$ is a K\"ahler metric. 
The \emph{energy} functional is given by the expression

\begin{equation}\label{equa2002}
\cE(u):= \sum_{j=0}^n\int_Xu\omega_u^{n-j}\wedge \omega^j.
\end{equation}
Given a (1,1)-form $\alpha$, one introduces the following version of the energy functional
\begin{equation}\label{equa2003}
\cE^\alpha(u):= \sum_{j=0}^{n-1}\int_Xu\omega_u^{n-j-1}\wedge \omega^j\wedge\alpha.
\end{equation}
For a smooth path $\omega_t:= \omega+ dd^cu_t$ of K\"ahler metrics depending on the parameter $t\in \Sigma$, one rapidly computes 
\begin{equation}\label{equa2004}
dd^c\cE(t)= \int_{X}\Omega^{n+1}, \quad dd^c\cE^\alpha(t)= 
\int_{X}\Omega^{n}\wedge\alpha,
\end{equation}
where $\Omega:= \omega+ dd^cu$ is a (1,1)-form on $X\times \Sigma$, and the integration is understood as the push-forward of an $(n+1,n+1)$ form to $\Sigma$ (we use the same notation for $\omega$ and its inverse image on $X\times\Sigma$). An important observation (cf. \cite{Chen2}, \cite{BB} and the references therein) is that the equalities (\ref{equa2004}) still holds true \emph{in the sense of distributions} 
if the path $(u_t)$ is only assumed to be continuous.

The \emph{Mabuchi functional} $\cM$ along $\cG$ is defined as follows

\begin{equation}\label{equa2001}
\cM(t) = \frac{S}{n+1} \cE(\varphi_t) - \cE^{Ric_{\omega}} (\varphi_t)  + 
\int_X \log\frac{\cG_{t}^n}{\omega^n} \cG_t^n
\end{equation}
where $\cG_t:= \omega+ dd^c\varphi_t$
is the restriction of $\cG$ to the slice $X\times \{t\}\subset X\times \Sigma$, and $S$ is the average of the scalar curvature of $(X, \omega)$. Unlike the original definition of 
$\cM$, the expression
(\ref{equa2001}) first introduced in \cite{Chen2}, has a meaning even if the regularity of $\varphi$ is only $\cC^{1,1}$. 
We recall further that the convexity of 
$\cM$ along weak geodesics was conjectured in \cite{Chen2}. 

Ideally, the convexity of $\cM$ would follow provided that one is able to
produce the following objects.
\smallskip

\noindent Let $(\Theta_\ep)_{\ep> 0}\subset \{\omega\}$ be a family of closed positive (1,1) currents on $X\times \Sigma$, such that for each positive 
$\ep$ we have.

\begin{enumerate}

\item[(a)] The potential $\phi_\ep$ of each $\Theta_\ep$ is of class $\cC^{1,1}$, and it converges to $\varphi$ locally uniformly on $X\times \Sigma$.
\smallskip

\item[(b)] The logarithm of the fiberwise determinant of $\Theta_\ep$ is of class $\cC^{1}$.
\smallskip

\item[(c)] The determinant of 
$\Theta_\ep|_{X\times \{t\}}$ converges a.e. to 
$\cG^n|_{X\times \{t\}}.$ 
\end{enumerate} 
\medskip

\noindent As explained in \cite{BB}, it is enough show that we can find $(\Theta_\ep)_{\ep> 0}$ as above, such that moreover
we have
\begin{equation}\label{equa100}
dd^c\log\frac{\Theta_\ep^n}{\omega^n}\wedge \cG^n\geq \Ricci_{\omega}
\wedge \cG^n
\end{equation}
where the quantity $\displaystyle \log\frac{\Theta_\ep^n}{\omega^n}$ denotes (slightly abusively) a function on $X\times \Sigma$. 
Indeed, given the relations (\ref{equa2004}), the inequality (\ref{equa100}) implies the convexity 
in the sense of distributions
of the following functional
\begin{equation}\label{equa2005}
\cM(\ep, t):= \frac{S}{n+1} \cE(\varphi_t) - \cE^{Ric_{\omega}} (\varphi_t)  + 
\int_X \log\frac{\Theta_{\ep}^n}{\omega^n} \cG_t^n.
\end{equation}
By the condition (b) the functional $\cM(\ep, t)$ is continuous; therefore its convexity in weak sense implies 
convexity in usual sense. The condition (c) would imply the same property for $\cM$, by letting $\ep\to 0$. 
\medskip

\noindent An excellent candidate for the family $\Theta_\ep$ would be the approximation of $\cG$ contained in the 
proof of X.X. Chen, i.e. the \emph{$\ep$-geodesics}. Indeed, the properties (a) and (b) are direct consequences of \cite{XX}, and the inequality (\ref{equa100}) can be checked to hold true on the set 
$\Lambda_{A, \ep}$ where 
$\Theta_\ep$ is uniformly bounded from below by $\exp (-A)\omega$ via a direct computation (this would be enough to conclude, by letting $A\to \infty$). However, it does not seem to be so easy to establish 
the crucial property (c) (we refer the the paper \cite{CT}, section 6, in order to have a glimpse at  the 
difficulties/consequences of such a statement).

In order to overcome this issue, we will consider in the next paragraph a different approximation of $\cG$, obtained by solving a family of MA equations. 

A version of the convexity of $\cM$ along the $\ep$-geodesics will be treated in the last part of our note.

\section{Fiber-wise approximation of $\cG$}

\noindent 
In order to construct the family of currents $(\Theta_\ep)_{\ep> 0}$ with the properties stated in the 
previous section we recall the following result.

\begin{theorem} \label{psh}
{\rm (\cite{MP})} Let $p: X\to Y$ be a holomorphic submersion. We consider  
a semi-positive class $\{\beta\}\in H^{1,1}(X, \bR)$, such that the adjoint class $c_1(K_{X_y})+ \{\beta\}|_{X_y}$
is K\"ahler for
any $y\in Y$.
Then the relative 
adjoint class
$$c_1(K_{X/Y})+ \{\beta\}$$
contains a closed positive current $\Xi$, whose restriction to each fiber $X_y$ is a positive definite form.

\end{theorem}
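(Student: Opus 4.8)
\medskip
\noindent\emph{Proof strategy.}
The plan is to produce $\Xi$ fibrewise as the \emph{twisted K\"ahler--Einstein metric}, and to deduce its positivity on the total space from a curvature identity on the fibres, in the spirit of Schumacher's analysis of families of canonically polarised manifolds (here $\beta$ supplies an additional semi-positive twist). As a first step I would pass to a smooth picture: since the statement is local over $Y$ --- and global whenever the data below can be chosen globally, e.g.\ for $Y$ compact --- I fix a smooth semi-positive representative $\beta\geq 0$ of $\{\beta\}$, a smooth metric on the line bundle $K_{X/Y}$, and, after adding $dd^c$ of a suitable smooth function on $X$, a smooth closed real $(1,1)$-form $\alpha_0$ in the class $c_1(K_{X/Y})+\{\beta\}$ whose restriction $\alpha_0|_{X_y}$ is K\"ahler for every $y$. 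The existence of such an $\alpha_0$ is the only point where a patching argument over $Y$ intervenes; it follows from the K\"ahlerianity of $c_1(K_{X_y})+\{\beta\}|_{X_y}$ together with the (proper) submersion structure of $p$.

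Next I would solve, for each $y\in Y$, the complex Monge--Amp\`ere equation
\[
\bigl(\alpha_0|_{X_y}+dd^c\psi_y\bigr)^n \;=\; e^{\,\psi_y+g_y}\,\mu_y
\]
on the compact fibre $X_y$, where $\mu_y>0$ is a fixed smooth volume form and $g_y\in C^\infty(X_y)$, depending smoothly on $y$, is chosen so that $\omega_y:=\alpha_0|_{X_y}+dd^c\psi_y$ satisfies $\Ricci(\omega_y)=-\omega_y+\beta|_{X_y}$; equivalently $\omega_y$ is the unique K\"ahler metric in $c_1(K_{X_y})+\{\beta\}|_{X_y}$ with that Ricci curvature. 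As the coefficient of $\psi_y$ in the exponent is positive, the Aubin--Yau theorem provides a unique smooth solution, and the implicit function theorem --- the linearisation being $\Delta_{\omega_y}-1$, an isomorphism on the compact fibre --- gives smooth dependence on $y$. Then $u(x):=\psi_{p(x)}(x)$ is a smooth function on $X$ and I set $\Xi:=\alpha_0+dd^c u$, so that $\Xi$ is a smooth closed real $(1,1)$-form in $c_1(K_{X/Y})+\{\beta\}$ with $\Xi|_{X_y}=\omega_y$ positive definite on every fibre.

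The crux is to prove $\Xi\geq 0$ on $X$. Since the vertical part of $\Xi$ is positive definite, the Schur complement criterion reduces this, in each base direction, to the non-negativity on the fibre of the \emph{geodesic curvature} $c(\Xi):=\Xi_{y\bar y}-\Xi_{y\bar k}\,\Xi^{\bar k j}\,\Xi_{j\bar y}$ (in local coordinates adapted to $p$, with the inverse that of the fibre block). Differentiating the Monge--Amp\`ere equation twice along $Y$ and inserting $\Ricci(\omega_y)=-\omega_y+\beta|_{X_y}$, I expect to recover Schumacher's identity in the shape
\[
(\Delta_{\omega_y}-1)\,c(\Xi)\;=\;-\,\bigl|\dbar V_y\bigr|^{2}_{\omega_y}\;-\;\beta\bigl(V_y,\overline{V_y}\bigr)\;\leq\;0,
\]
where $V_y$ is the $\Xi$-horizontal lift of the chosen base direction, $\dbar V_y|_{X_y}$ represents the corresponding Kodaira--Spencer class, and $|\cdot|_{\omega_y}$ is the pointwise fibre norm; the sign of the last term is exactly what the semi-positivity of $\beta$ furnishes. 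On the compact fibre $X_y$ the operator $\Delta_{\omega_y}-1$ obeys the maximum principle --- $(\Delta_{\omega_y}-1)v\leq 0$ forces $v\geq 0$ --- whence $c(\Xi)\geq 0$ fibrewise and so $\Xi\geq 0$ on $X$. This yields the required current (in fact a smooth representative). When the data above, or the resulting a priori estimates, are only available over relatively compact pieces of $Y$ --- or when $\{\beta\}$ is merely a limit of smooth semi-positive classes, in which case one runs the argument for $\{\beta\}+t\{\omega_X\}$, $t>0$ --- one takes a weak limit at the end and obtains a closed positive current which is fibrewise positive but possibly no longer smooth.

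I expect the main obstacle to be this last curvature computation: checking that each term generated by differentiating the Monge--Amp\`ere equation twice in the base directions enters $(\Delta_{\omega_y}-1)c(\Xi)$ with a non-positive sign, the contribution of the twist being bounded exactly by the semi-positivity of $\beta$. The fibrewise smoothness and smooth dependence on $y$ obtained in the second step are what make these manipulations legitimate; constructing the fibrewise-K\"ahler reference form $\alpha_0$ is, by comparison, a routine matter.
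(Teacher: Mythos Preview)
The paper does not itself prove Theorem~\ref{psh}; it is quoted from \cite{MP}, and only the \emph{conclusion} (together with the smooth positivity \eqref{equa1408} coming from its proof) is used as a black box in the subsequent approximation argument. Your outline---solve on each fibre the twisted K\"ahler--Einstein equation $\Ricci(\omega_y)=-\omega_y+\beta|_{X_y}$, assemble the solutions into a smooth form $\Xi$ on $X$, and then deduce $\Xi\geq 0$ from the Schumacher-type identity $(\Delta_{\omega_y}-1)c(\Xi)=-|\dbar V_y|^2-\beta(V_y,\ol{V_y})\leq 0$ via the maximum principle---is exactly the argument of \cite{MP}, so your proposal is correct and matches the intended proof.
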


\medskip

\noindent We specialize here to the case of the trivial submersion
$X\times \Sigma
\to \Sigma$, so the relative canonical bundle equals the inverse image of $K_X$.
\medskip



\noindent As a consequence of the previous result we infer the next statement; we recall that $\cG$ denotes the (weak) geodesic between the two metrics $\omega_0$ and $\omega_1$.

\begin{theorem}\label{coro1}
For each $t\in\Sigma $ and for each $0< \ep\ll 1$, we consider the equation 
\begin{equation}\label{equa1208}
\big(\Theta_{\omega}(K_X)+ {\varepsilon}^{-1}\cG+ dd^c\phi_{t, \ep}\big)^n= \ep^{-n}\exp(\phi_{t, \ep})\omega^n;
\end{equation} 
on $X\times \{t\}$. It has a unique $\cC^{1,1}\big(X\times \{t\}\big)$-solution $\phi_{t, \ep}$; the 
resulting function $\phi_\ep $ on
$X\times \Sigma$ is continuous on the interior points of $X\times \Sigma$, and moreover we have 
\begin{equation}\label{equa1308}
\Theta_{\omega}(K_X)+ {\varepsilon}^{-1}\cG+ dd^c\phi_{\ep}\geq 0
\end{equation}
on the product manifold $X\times \Sigma$.
\end{theorem}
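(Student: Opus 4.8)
The plan is to solve (\ref{equa1208}) fibrewise by reducing it to a non-degenerate Monge--Amp\`ere equation with $\cC^{1,1}$ right-hand side, to propagate continuity in $t$ via a stability estimate, and to obtain the global inequality (\ref{equa1308}) from (the construction behind) Theorem \ref{psh} applied to a smooth approximation of $\cG$.

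\emph{Solution on each slice.} For $0<\ep\ll1$ the smooth form $\omega_\ep:=\Theta_\omega(K_X)+\ep^{-1}\omega$ is K\"ahler on $X$, since the fixed form $\Theta_\omega(K_X)$ is dominated by $\ep^{-1}\omega$. As $\cG_t=\omega+dd^c\varphi_t$ with $\varphi_t\in\cC^{1,1}(X)$, one has $\Theta_\omega(K_X)+\ep^{-1}\cG_t=\omega_\ep+dd^c(\ep^{-1}\varphi_t)$, so putting $u:=\ep^{-1}\varphi_t+\phi_{t,\ep}$ the equation (\ref{equa1208}) becomes
\[
(\omega_\ep+dd^cu)^n=f_{t,\ep}\,e^{u}\,\omega_\ep^n,\qquad f_{t,\ep}:=\ep^{-n}\,e^{-\ep^{-1}\varphi_t}\,\frac{\omega^n}{\omega_\ep^n}>0.
\]
Here $\log f_{t,\ep}\in\cC^{1,1}(X)$, with norm bounded uniformly in $t$ over any $\Sigma'\Subset\Sigma$. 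Existence and uniqueness of a bounded solution are classical: the coupling term $e^u$ gives the $\cC^0$-bound at once by evaluating the equation at the extrema of $u$, and uniqueness follows from the comparison principle. For the $\cC^{1,1}$-estimate one runs Yau's second-order argument, noting that for $F=\log f_{t,\ep}+u$ one has $\Delta_{\omega_\ep}F=\Delta_{\omega_\ep}\log f_{t,\ep}+\mathrm{tr}_{\omega_\ep}(\omega_\ep+dd^cu)-n\geq -C$; this is exactly the type of estimate established in \cite{XX}, \cite{Blocki}. Undoing the substitution yields the unique $\cC^{1,1}(X\times\{t\})$ solution $\phi_{t,\ep}=u-\ep^{-1}\varphi_t$.

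\emph{Continuity on the interior of $X\times\Sigma$.} Applying the maximum principle to the reduced equation at an extremum of $u_t-u_{t'}$ gives the stability estimate $\|u_t-u_{t'}\|_\infty\leq\|\log f_{t,\ep}-\log f_{t',\ep}\|_\infty=\ep^{-1}\|\varphi_t-\varphi_{t'}\|_\infty$. Since $\varphi$ is $\cC^{1,1}$, hence locally Lipschitz, on $X\times\Sigma$, this tends to $0$ as $t'\to t$; thus $t\mapsto u_t$, and therefore $t\mapsto\phi_{t,\ep}$, is continuous (indeed locally Lipschitz) into $\cC^0(X)$. Combined with $\phi_{t,\ep}\in\cC^0(X)$ for fixed $t$, this gives joint continuity of $\phi_\ep$ at every interior point.

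\emph{Global positivity.} This is the substantive part, and I would derive it from Theorem \ref{psh}. Its proof constructs the desired current by solving on each fibre a twisted K\"ahler--Einstein equation which, up to an additive constant, is exactly (\ref{equa1208}), and by showing these fibrewise metrics assemble into a closed positive current on the total space; for a smooth K\"ahler twist this is the content of \cite{MP}. To handle the $\cC^{1,1}$ (and possibly degenerate) twist $\cG$, I would approximate: let $\cG_\delta:=\omega+dd^c\varphi_\delta$ be the $\delta$-geodesics of \cite{XX}, which on any $\Sigma'\Subset\Sigma$ are smooth K\"ahler forms on $X\times\Sigma'$ in the class $p_X^\ast\{\omega\}$, with $\varphi_\delta\to\varphi$ locally uniformly. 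Feeding the smooth representative $\ep^{-1}\cG_\delta$ of the class $\ep^{-1}p_X^\ast\{\omega\}$ (whose fibre-adjoint class $c_1(K_X)+\ep^{-1}\{\omega\}$ is K\"ahler for $0<\ep\ll1$) into the construction of Theorem \ref{psh} on $X\times\Sigma'\to\Sigma'$ produces a closed positive current of the form $\Theta_\omega(K_X)+\ep^{-1}\cG_\delta+dd^c\phi_{\ep,\delta}$ whose slices solve the $\cG_\delta$-analogue of (\ref{equa1208}); by the fibrewise uniqueness above, $\phi_{\ep,\delta}$ is precisely the function assembled from these solutions, with no additive ambiguity. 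By the stability estimate, $\phi_{\ep,\delta}\to\phi_\ep$ locally uniformly as $\delta\to0$, hence $\Theta_\omega(K_X)+\ep^{-1}\cG_\delta+dd^c\phi_{\ep,\delta}\to\Theta_\omega(K_X)+\ep^{-1}\cG+dd^c\phi_\ep$ weakly as currents, and a weak limit of positive currents is positive, which is (\ref{equa1308}). I expect this last step --- carrying the positivity of \cite{MP} through the degenerate limit $\cG_\delta\to\cG$, and making sure the construction there really furnishes a global potential whose slices solve (\ref{equa1208}) --- to be the main obstacle; the fibrewise existence, the $\cC^{1,1}$-estimate, and the continuity are by contrast routine given \cite{XX}, \cite{Blocki}.
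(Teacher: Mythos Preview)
Your proposal is correct and follows essentially the same strategy as the paper: approximate $\cG$ by smooth K\"ahler forms, apply the construction behind Theorem~\ref{psh} (i.e.\ \cite{MP}) to the smooth twist so that the fibrewise solutions of the regularized version of (\ref{equa1208}) fit together into a global positive current, and then pass to the limit using a maximum-principle stability estimate to obtain both the continuity of $\phi_\ep$ and the inequality (\ref{equa1308}).

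The only noteworthy differences are organizational. First, you use the $\delta$-geodesics of \cite{XX} as the smooth approximation, whereas the paper uses the convolution regularization of Demailly \cite{DEM1}; both families are smooth K\"ahler on $X\times\Sigma'$ and converge to $\cG$ with enough uniformity for the stability estimate, so either choice works (the paper even needs to add a small multiple of $\omega+\sqrt{-1}dt\wedge d\bar t$ to restore positivity, which your choice avoids). Second, you establish the fibrewise $\cC^{1,1}$ existence for (\ref{equa1208}) directly from the Laplacian estimate with $\cC^{1,1}$ right-hand side, and only afterwards approximate to get positivity; the paper instead obtains existence, equicontinuity, and positivity all at once from the approximation, deferring the explicit $\cC^{1,1}$ bound to the subsequent Theorem~\ref{MA}. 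Your caution about identifying the current produced by \cite{MP} with the one assembled from the fibrewise solutions of (\ref{equa1208}) is well placed; the paper handles this by appealing directly to the \emph{proof} of Theorem~\ref{psh}, where the current is constructed precisely via the equation (\ref{equa11}), so no additive ambiguity arises.
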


\begin{proof}

\smallskip
We proceed in a very standard manner, namely by an approximation argument.
Let $(\cG_\delta)$ be a family of (1,1)-forms on the open set
$$X\times \Sigma^\prime\subset X\times \Sigma$$
obtained by considering the convolution of the 
potential $\varphi$ with a convolution kernel $K_\delta$, cf. e.g. \cite{DEM1}. Here $\Sigma^\prime$ stands for 
a compact subset of the annulus $\Sigma$.
The resulting (1,1) forms are
approximating our weak geodesic $\cG$, as follows.

\begin{enumerate}

\item[(i)] The forms $\cG_\delta$ are non-singular, and we 
have 
$$\cG_\delta\geq -C\delta(\omega+ \sqrt{-1}dt\wedge d\ol t)$$ on $X\times \Sigma^\prime.$
\smallskip

\item[(ii)] The coefficients of $\cG_\delta$ are uniformly bounded, 
and they converge in $L^p$ norm to the coefficients of $\cG$,
so that if we write 
$$\cG_\delta:= \omega+ dd^c\varphi_\delta$$
then we have 
$$\vert dd^c\varphi_\delta- dd^c\varphi\vert_{L^p(X\times \{t\})}\to 0$$
as $\delta\to 0$, uniformly with respect to $t\in \Sigma^\prime$ and for any $p$.
\smallskip

\item[(iii)] We have $$\sup_{t\in \Sigma^\prime}\Vert \varphi_\delta-\varphi\Vert_{\cC^1(X\times \{t\})}\to 0$$
as $\delta \to 0$. 
\smallskip


\end{enumerate}
\medskip

\noindent The properties (ii) and (iii) of the approximation family $(\cG_\delta)$ hold true thanks to the 
explicit construction of $(\cG_\delta)$ by using the convolution of the potential of $\cG$ with a regularizing kernel (\cite{DEM1}), combined with the  fact that the potential of $\cG$ is $\cC^{1,1}$.

For each $\eta:= (\ep, \delta)$ such that $\ep$ and $\delta$ are positive and small enough 
we define the semi-positive form
\begin{equation}\label{equa9}
\beta_{\eta}:= \frac{1}{\ep} \big(\cG_\delta+ C\delta(\omega+ \sqrt{-1}dt\wedge d\ol t)\big). 
\end{equation}
The class $c_1(K_X)+ \{\beta_{\eta}\}|_{X\times \{t\}}$ is clearly K\"ahler; by the classical result of S.-T. Yau
we infer that there exists
a function $\phi_{t, \eta}$ such that we have
\begin{equation}\label{equa10}
\Theta_{\omega}(K_X)+ \beta_{\eta}|_{X\times \{t\}}+ dd^c\phi_{t, \eta}>0
\end{equation}
together with
\begin{equation}\label{equa11}
\big(\Theta_{\omega}(K_X)+ \beta_{\eta}+ dd^c\phi_{t, \eta}\big)^n= \ep^{-n}\exp(\phi_{t, \eta})\omega^n
\end{equation}
on the fiber $X\times \{t\}$.
\medskip

According to the proof of Theorem \ref{psh}, we infer that
\begin{equation}\label{equa1408}
\Xi_{\eta}: = \Theta_{\omega}(K_X)+ \beta_{\eta}+ dd^c\phi_{\eta}>0,
\end{equation}
as a smooth positive $(1,1)$ form on $X\times \Sigma^\prime$--actually, this is the only reason why we need to consider the regularization of 
$\cG$ with respect to the parameter ``$t$'' as well.
\smallskip

\noindent We show next that the family $\phi_{\eta}$ is equicontinuous \emph{for each $\ep$ fixed}; prior to this, we introduce the a few
notations. 

Let $\tau_\eta:= \ep \phi_\eta+ \varphi_\delta$; it is a smooth function defined on $X\times \Sigma^\prime$.
Then for each $t_0, t_1\in \Sigma^\prime$ we have
\begin{equation}\label{equa1508}
\big(\Psi_{t_0, \eta}+ dd^c(\tau_{\eta}(t_1)- \tau_{\eta}(t_0))\big)^n= e^{\frac{1}{\ep}(\tau_{\eta}(t_1)- \tau_{\eta}(t_0)- 
\varphi_{\delta}(t_1)+ \varphi_{\delta}(t_0))}\Psi_{t_0, \eta}^n
\end{equation}
where $\displaystyle \Psi_{t, \eta}:= \ep\Xi_{\eta}|_{X\times \{t\}}$.

The maximum principle, combined with the property (iii) above (concerning the uniformity properties 
of the sequence $\varphi_\delta$ on $X\times \Sigma^\prime$) shows that we have
\begin{equation}\label{equa1608}
\sup_{x\in X}|\phi_\eta(t_0, x)- \phi_\eta(t_1, x)|\leq C\ep^{-1}|t_0- t_1|
\end{equation}
The same kind of arguments (i.e. the maximum principle applied on fibers $X\times\{t\}$)
show that in fact we have 
 \begin{equation}\label{equa1708}
|\phi_\eta(t_0, x_0)- \phi_\eta(t_1, x_1)|\leq C\ep^{-1}(|t_0- t_1|+ {\rm dist(x_0, x_1)})
\end{equation}
where $C$ is a constant independent of $\eta=(\ep, \delta)$. This proves the claimed equicontinuity.

As a consequence, the limit $\phi_\ep:= \lim_{\delta\to 0}\phi_{\ep, \delta}$ is continuous on the interior of 
$X\times \Sigma$, and we have
 \begin{equation}\label{equa1808}
\Theta_{\omega}(K_X)+ \frac{1}{\ep}\cG+ dd^c\phi_{\ep}\geq 0.
\end{equation}
which finishes the proof of Theorem \ref{coro1}, except for the $\cC^{1,1}\big(X\times \{t\}\big)$-regularity of the 
solution $\phi_{t, \ep}$; this will be treated in out next result.
 \end{proof}
\medskip

\noindent In our next statement we will use another type of regularization of the geodesic $\cG$, borrowed from \cite{BD1}.
We define $\cG^\prime_{\delta}:= \omega+ dd^c\varphi_\delta$ on $X\times \Sigma$, such that for each $t\in \Sigma$,
the function $\varphi_\delta$ is the regularization of $\varphi|_{X\times \{t\}}$ by a global convolution kernel.
The properties of the resulting form which will be relevant for us are as follows.
\begin{enumerate}

\item[(1)] The forms $\cG^\prime_\delta$ are non-singular when restricted to each fiber $X\times \{t\}$, and 
moreover 
there exists a constant $C> 0$ such that we have
$$\vert dd^c\varphi_\delta\vert\leq C$$
for any $\delta> 0$.
We equally 
have 
$$\cG^\prime_\delta\geq -C\delta(\omega+ \sqrt{-1}dt\wedge d\ol t)$$ on $X\times \Sigma.$
\smallskip

\item[(2)] The coefficients of $\cG^\prime_\delta$ are uniformly bounded, 
and they converge in $L^p$ norm to the coefficients of $\cG$,
so that if we write 
$$\cG^\prime_\delta:= \omega+ dd^c\varphi_\delta$$
then we have 
$$\vert dd^c\varphi_\delta- dd^c\varphi\vert_{L^p(X\times \{t\})}\to 0$$
as $\delta\to 0$, uniformly with respect to $t\in \Sigma$ and for any $p$.
\smallskip

\item[(3)] We have $$\lim\Vert \varphi_\delta-\varphi\Vert_{\cC^0(X\times \Sigma)}= 0$$
as $\delta \to 0$. 
\smallskip


\end{enumerate}
\medskip
We consider the Monge-Amp\`ere equation
\begin{equation}\label{equa1908}
\big(\Theta_{\omega}(K_X)+ \beta^\prime_{\eta}+ dd^c\phi_{t, \eta}\big)^n= \ep^{-n}\exp(\phi_{t, \eta})\omega^n
\end{equation}
on the fiber $X\times \{t\}$, where
\begin{equation}\label{equa2008}
\beta_{\eta}:= \frac{1}{\ep} \big(\cG^\prime_\delta+ C\delta(\omega+ \sqrt{-1}dt\wedge d\ol t)\big). 
\end{equation}

\noindent The regularity/uniformity properties of the functions $(\phi_{t, \eta})_{\eta}$ are stated in the following result.

\begin{theorem} \label{MA}
The following assertions hold true.

\begin{enumerate}

\item[(a)] For each fixed $\ep$, the family $\phi_{\eta}$ obtained by piecing together the 
fiber-wise solutions $\phi_{t, \eta}$ is equicontinuous.
\smallskip

\item[(b)] There exists a constant $C> 0$, independent of $\eta$ 
such that
$$ \sup_X\phi_{\ep, \delta} \leq C, \ \ -\ep \inf_X\phi_{\ep,\delta} \leq C,   \ \  |\ep dd^c\phi_{\ep, \delta}|\leq C.$$
on the fiber $X\times \{t\}$.
\smallskip

\item[(c)] Therefore for each fixed $\ep> 0$, we can extract a limit 
$$\lim_{\delta\to 0}\phi_{\ep, \delta}= \phi_\varepsilon,$$
strongly in $\cC^0$, where the restriction of $\phi_\varepsilon$ to $X\times \{t\}$ is the unique 
$\cC^{1,1}$ solution of the degenerate Monge-Amp\`ere equation
\begin{equation}\label{equa12}
\big(\Theta_{\omega}(K_X)+ {\varepsilon}^{-1}\cG+ dd^c\phi_{t, \ep}\big)^n= \ep^{-n}\exp(\phi_{t, \ep})\omega^n
\end{equation}
on the fiber $X\times \{t\}$. 

\smallskip

\item[(d)] The measures 
$$\exp(\phi_{t, \ep})\omega^n$$
are converging to $\displaystyle \cG|_{X\times \{t\}}^n$ weakly in $L^p$ for any $p$, as $\ep\to 0$.

\end{enumerate}

\end{theorem}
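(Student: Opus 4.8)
The plan is to prove the four assertions of Theorem \ref{MA} in the order they are stated, since each relies on the previous ones.

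\textbf{Assertion (a): equicontinuity.} I would essentially repeat the argument already carried out in the proof of Theorem \ref{coro1}. Fix $\ep > 0$. Setting $\tau_\eta := \ep\phi_\eta + \varphi_\delta$, one obtains on each pair of fibers the identity
\begin{equation}\label{equa-plan-1}
\big(\Psi_{t_0, \eta} + dd^c(\tau_\eta(t_1) - \tau_\eta(t_0))\big)^n = e^{\frac{1}{\ep}\left(\tau_\eta(t_1) - \tau_\eta(t_0) - \varphi_\delta(t_1) + \varphi_\delta(t_0)\right)}\Psi_{t_0, \eta}^n,
\end{equation}
where $\Psi_{t,\eta} := \ep\,\Xi_\eta|_{X\times\{t\}}$ with $\Xi_\eta := \Theta_\omega(K_X) + \beta'_\eta + dd^c\phi_\eta$. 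Applying the maximum principle on the fiber $X\times\{t\}$ and using property (3) of the new regularization $\cG'_\delta$ (uniform $\cC^0$-convergence of $\varphi_\delta$ to $\varphi$, and the fact that $\varphi$ is $\cC^{1,1}$, hence Lipschitz in $t$), I get $\sup_X |\phi_\eta(t_0,x) - \phi_\eta(t_1,x)| \leq C\ep^{-1}|t_0 - t_1|$ with $C$ independent of $\eta$. Combining this with the fiberwise modulus of continuity (again via the comparison principle, using that the right-hand side density is a fixed smooth positive form up to the exponential factor) yields the joint estimate $|\phi_\eta(t_0,x_0) - \phi_\eta(t_1,x_1)| \leq C\ep^{-1}(|t_0-t_1| + \mathrm{dist}(x_0,x_1))$, which is the desired equicontinuity.

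\textbf{Assertion (b): uniform estimates.} The bound $\sup_X \phi_{\ep,\delta} \leq C$ follows from evaluating equation (\ref{equa1908}) at a point where $\phi_{t,\eta}$ attains its maximum: there $dd^c\phi_{t,\eta} \leq 0$, so $\ep^{-n}e^{\phi_{t,\eta}}\omega^n \leq (\Theta_\omega(K_X) + \beta'_\eta)^n$, and since the coefficients of $\ep\beta'_\eta$ are uniformly bounded (property (2)), the right-hand side is $O(\ep^{-n})$, giving $\phi_{t,\eta} \leq C$. Symmetrically, at the minimum point $\ep^{-n}e^{\phi_{t,\eta}}\omega^n \geq (\Theta_\omega(K_X) + \beta'_\eta)^n \geq (\text{const})\,\omega^n$ after multiplying by $\ep^n$ and using that $\cG'_\delta + C\delta(\cdots) \geq 0$ together with a uniform positivity of the perturbed form; this gives $-\ep\inf_X \phi_{\ep,\delta} \leq C$. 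The Laplacian bound $|\ep\,dd^c\phi_{\ep,\delta}| \leq C$ is the substantial point: it is the Aubin--Yau second-order estimate applied to (\ref{equa1908}). Rewriting the equation with unknown $\psi := \ep\phi_{t,\eta}$, so that $(\ep\,\Theta_\omega(K_X) + \cG'_\delta + C\delta(\cdots) + dd^c\psi)^n = e^{\psi/\ep}\omega^n$, one runs the standard Laplacian estimate; the right-hand side exponent $\psi/\ep$ has gradient controlled by the $\cC^{1,1}$-bound on $\varphi$ (through the relation $\psi = \ep\phi$ and the fiberwise equation), and crucially the reference form $\cG'_\delta + C\delta(\cdots)$ has uniformly bounded coefficients and bounded-below bisectional-curvature contribution, so the constants come out independent of $\eta$. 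This is where I expect the main technical work, and it is the heart of Chen's $\cC^{1,1}$-estimate for geodesics adapted to the present twisted setting; I would cite \cite{XX}, \cite{Blocki} for the precise form of the argument.

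\textbf{Assertions (c) and (d): passage to the limit.} Given (a) and (b), Arzelà--Ascoli lets me extract, for each fixed $\ep$, a $\cC^0$-limit $\phi_\ep = \lim_{\delta\to 0}\phi_{\ep,\delta}$ on the interior of $X\times\Sigma$; the uniform Laplacian bound in (b) upgrades this to a weak-$*$ $W^{2,p}$ (hence $\cC^{1,\alpha}$) limit on each fiber, and stability of Monge--Ampère operators under such convergence (Bedford--Taylor), together with $\cG'_\delta + C\delta(\cdots) \to \cG$, shows $\phi_\ep|_{X\times\{t\}}$ solves (\ref{equa12}); uniqueness is the usual comparison-principle argument for the degenerate complex Monge--Ampère equation with exponentially increasing right-hand side. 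Finally for (d): from (\ref{equa12}), $\exp(\phi_{t,\ep})\omega^n = \ep^n(\Theta_\omega(K_X) + \ep^{-1}\cG + dd^c\phi_{t,\ep})^n$. Expanding the $n$-th power in $\ep$, the leading term is $\cG^n|_{X\times\{t\}}$ (recall $\cG^{n+1} = 0$ but the fiberwise $n$-th power need not vanish) and all other terms carry a positive power of $\ep$ times forms with coefficients bounded in $L^p$ uniformly — this is exactly where the bound $|\ep\,dd^c\phi_{\ep,\delta}| \leq C$ from (b) is used, to control the mixed terms $\ep^j(\ep\,dd^c\phi_{t,\ep})^k(\cdots)$. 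Letting $\ep \to 0$ gives the weak-$L^p$ convergence $\exp(\phi_{t,\ep})\omega^n \to \cG|_{X\times\{t\}}^n$ claimed in (d), which is precisely property (c) from Section \ref{what} that we needed. The main obstacle throughout is the uniform second-order estimate in (b): everything else is a combination of the maximum principle and standard stability/uniqueness results for complex Monge--Ampère equations.
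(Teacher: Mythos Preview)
Your treatment of (a)--(c) matches the paper's: equicontinuity via the maximum principle as in Theorem \ref{coro1}, the $\sup/\inf$ bounds by evaluating at extrema, and the Laplacian estimate by the standard Aubin--Yau argument (the paper spells this out with the auxiliary function $\log(n+\Delta\tau)-A\tau$, but the content is the same).

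The gap is in (d). Your expansion argument does not work. Writing
\[
\exp(\phi_{t,\ep})\,\omega^n=\bigl(\ep\,\Theta_\omega(K_X)+\cG+\ep\,dd^c\phi_{t,\ep}\bigr)^n,
\]
the terms with $j=0$ powers of $\ep\,\Theta_\omega(K_X)$ and $k\geq 1$ powers of $\ep\,dd^c\phi_{t,\ep}$, namely $\binom{n}{k}\cG^{n-k}\wedge(\ep\,dd^c\phi_{t,\ep})^k$, carry \emph{no} extra factor of $\ep$. The estimate from (b) says only that $|\ep\,dd^c\phi_{t,\ep}|\leq C$, so these mixed terms are $O(1)$, not $o(1)$; they do not disappear as $\ep\to 0$. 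Indeed, from (b) one has $-C\leq \ep\phi_{t,\ep}\leq C\ep$, which gives $\limsup_{\ep\to 0}\ep\phi_{t,\ep}\leq 0$ but no lower bound tending to zero. A priori the limit $\rho:=\lim_{\ep\to 0}\ep\phi_{t,\ep}$ could be a nontrivial nonpositive $\cC^{1,1}$ function, and then $(\cG+dd^c\rho)^n\neq\cG^n$ in general.

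The paper closes this gap with a genuinely different argument: set $\tau_\ep=\ep\phi_\ep+\varphi$, extract a limit $\tau\leq\varphi$, and use the \emph{comparison principle} on the sets $\Omega_\delta=\{\tau<\varphi-\delta\}$ to show $\cG^n=0$ there (because the rewritten equation $(\omega+\ep\Theta+dd^c\tau_\ep)^n=e^{(\tau_\ep-\varphi)/\ep}\omega^n$ forces the right side to vanish on $\Omega_\delta$ as $\ep\to 0$). This yields $\cG^n=(\omega+dd^c\tau)^n$ globally, and then the \emph{uniqueness theorem} for Monge--Amp\`ere equations with $L^p$ density forces $\tau=\varphi$, i.e.\ $\rho=0$. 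Only after this does the convergence $\exp(\phi_{t,\ep})\omega^n\to\cG_t^n$ follow. Your direct expansion tries to bypass exactly this step, and cannot.
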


\begin{proof} Except maybe for the point (d), the arguments of the proof 
rely on basic results in MA theory, so we will be very sketchy. 

The point (a) was basically discussed during the proof of Theorem \ref{coro1}. In addition, we remark that by th esame procedure we obtain the equicontinuity continuity of $(\phi_\eta)$ on $X\times\Sigma$ up to the boundary of $X\times\Sigma$: this is a consequence of the 
 property (2) of $\cG_\delta^\prime$. 

Concerning the point (c), the upper bound of the potentials is a consequence of the maximum 
principle since we can rewrite the equation (\ref{equa1908}) as 
\begin{equation}\label{equa112.5}
\big(\varepsilon\Theta_{\omega}(K_X) + \cG_{\delta} + dd^c (\varepsilon\phi_{t,\eta})\big)^n = \exp(\phi_{t,\eta})\omega^n,
\end{equation}
then take $\phi_{t,\eta}(p) = \max_X \phi_{t,\eta}$, which implies $dd^c \phi_{t,\eta}(p) \leqslant 0$. But notice the form 
$ \varepsilon\Theta_{\omega}(K_X)+ \cG_{\delta} $ is strictly positive at the point $p$ thanks to the inequality (\ref{equa10}), hence 
\[
\phi_{t,\eta} \leqslant \phi_{t,\eta}(p) \leqslant \log\frac{(\varepsilon \Theta_{\omega}(K_X) + \cG_{\delta})^n}{\omega^n}(p)\leqslant C.
\]

On the other hand, the lower bound is obtained by considering 
\begin{equation}\label{equa112.6}
(\omega+ \varepsilon\Theta_{\omega}(K_X) + dd^c \tau_{t,\eta} )^n = e^{\frac{1}{\epsilon}(\tau_{t,\eta} - \varphi_{\delta})}\omega^n 
\end{equation}
where $\tau_{t,\eta} = \varepsilon\phi_{t,\eta} + \varphi_{\delta}$, so by choosing $\tau_{t,\eta}(q) = \min_X \tau_{t,\eta}$, the minimum principle says 
\[
\ep\phi_{t,\eta} + \varphi_{\delta}\geq
\varepsilon\log\frac{(\omega+\varepsilon \Theta_{\omega}(K_X))^n}{\omega^n}(q) + \varphi_{\delta}(q)\geqslant - \varepsilon C + \varphi_{\delta}(q), 
\]
hence $\ep\phi_{t,\eta} \geqslant -C$ for some uniform constant $C$. Now the bound of $dd^c \tau_{t,\eta}$ follows from the usual Laplacian estimate for the Monge-Amp\'ere equations, cf. \cite{Blocki}. For fixed $\eta$, let $\omega_{\varepsilon} = \omega+ \varepsilon\Theta_{\omega}(K_X)$, and notice that it is a smooth non-degenerate approximation of $\omega$, then we can write $\Delta\tau = g^{j\bar{k}}_{\ep} \tau_{j\bar{k}}$, and set 
\[
\alpha: = \log (n+\Delta \tau) - A \tau,
\]
where $A>0$ is some constant determined later. Consider the point $p$ where the maximum of $\alpha$ is obtained, and take 
$u = G_{\ep} + \tau$, where the function $G_{\ep}$ is the local potential of $\omega_{\ep}$ in a normal coordinate ball of $p$, then the standard
maximum principle argument implies at the point $p$
\begin{equation}\label{equa112.7}
0 \geqslant \frac{1}{\Delta u}\big\{ -B\Delta u\sum_{p}\frac{1}{u_{p\bar{p}}} +
 \Delta\phi_{t,\eta} - S \big\}
+ A\sum_{p} \frac{1}{u_{p\bar{p}}} - nA,
\end{equation}
where $B>0$ is the lower bound of bisectional curvature, and $S$ is the upper bound of scalar curvature of $(X, \omega_{\ep})$. Now take $A =B$, then we have 
\[
nB \Delta u + S \geq \frac{1}{\ep} \Delta (\tau - \varphi_{\delta}), 
\]
hence
\[
n+\Delta \varphi_{\delta} +S\ep \geq (1 - \ep n B) \Delta u,
\]
and finally 
\[
\Delta u (p) \leq \frac{C}{ 1 - \ep n B} < 2C
\]
for some uniform constant $C$, when $\ep < \frac{1}{2nB}$. Then suppose there is some uniform constant $C'$ such that $osc(\tau) < C'$, we infer
\[
\Delta u \leq 2C e^{2BC'}  
\]

The statement (d) is a consequence of the elliptic regularity results, cf. \cite{GT}. Moreover, since the solution $\phi_{t, \ep}$ belongs to the space $\cC^{1,1}$ we infer that the equality 
(\ref{equa12}) holds almost everywhere  on $X\times \{t\}$ (in the sense of $L^{\infty}$ functions).

In order to establish the point (e), we re-write the equation (\ref{equa12}) as follows
\begin{equation}\label{equa113}
(\omega+ \ep \Theta_{\omega}(K_X)+ dd^c\tau_\ep)^n= e^{\frac{1}{\ep}(\tau_\ep- \varphi)}\omega^n
\end{equation}
where we recall that $\cG= \omega+ dd^c\varphi$. In the relation (\ref{equa113}), we denote 
$\tau_\ep:= \ep\phi_\ep+ \varphi$, so we will be done if we can prove that 
$$\ep\phi_\ep\to 0.$$
In any case, thanks to the estimates (c), there exists some function 
$\rho\in \cC^{1,1}$ such that we have

\begin{equation}\label{equa42}
\ep\phi_\ep\to \rho
\end{equation}
strongly in $\cC^{1, \alpha}$ for any $\alpha< 1$ as $\ep\to 0$, so that the limit $\tau$ extracted from 
$\tau_\ep$ verifies the inequality 
\begin{equation}\label{equa43}
\tau\leq \varphi.
\end{equation}
as it is clear from the second part of estimate (c). 

In particular, the convergence statement in (\ref{equa42}) implies that we have
\begin{equation}\label{equa143}
(\omega+ \ep \Theta_{\omega}(K_X)+ dd^c\tau_\ep)^n\to (\omega+ dd^c\tau)^n
\end{equation}
is weak sense in $L^p$ for any $p$ (this is due to the fact all currents in the concern have uniformly bounded $L^{\infty}$ coefficients ).
 
Let $\Omega_\delta:= \{ \tau< \varphi-\delta \}$; it is an open subset of $X$, and we claim that $\displaystyle \cG^n|_{\Omega_\delta}=0$, for each $\delta> 0$.
Indeed, by the comparison principle \cite{SK}, \cite{GZ07} we have
\begin{equation}\label{equa44}
\int_{\Omega_\delta}\cG^n\leq \int_{\Omega_\delta}(\omega+ dd^c\tau)^n.
\end{equation} 
However, as we can see from equation (\ref{equa113}), we have 
$\displaystyle \int_{\Omega_\delta}(\omega+ dd^c\tau)^n=0$, 
simply because on the set $\Omega_\delta$ the inequality
$\displaystyle \frac{\tau_\ep-\varphi}{\ep}< -\frac{\delta}{2\ep}$ as soon as $\ep$ is small enough
--remark that this is a consequence of the uniform convergence in (\ref{equa42})--
 so our claim is proved. 
\smallskip

But then it follows that $\displaystyle \cG^n|_{\overline \Omega}=0$, where $\overline\Omega$ is the 
closure of the open set $\{ \tau< \varphi \}\subset X$. Indeed, the set  
$\overline \Omega\setminus \Omega$ has measure zero, and the coefficients of $\cG$ are bounded.
We infer that we have
\begin{equation}\label{equa45}
\cG^n= (\omega+ dd^c\tau)^n
\end{equation}
since the complement of $\ol\Omega$ is an open set where $\tau$ coincides with $\varphi$. 

We invoke next the uniqueness result in \cite{SK} concerning the solutions of MA equations whose right hand side member has a density in $L^p$, for $p> 1$, so that $\rho=0$ and the point (e)
of our lemma follows.
\end{proof}

\begin{rem} {\rm The fiber-wise convergence (e) was proved in \cite{Berman} in a slightly different setting; the main argument in that article relies on the variational approach developed in \cite{BBGZ}. However, we have chosen to give a direct proof here, for the sake of variation.}  
\end{rem}

\subsection{Convexity}
Let $t_0, t_1, t_2\in [0,1]$ be three arbitrary points. By the property (e) combined with a 
result due to Banach-Saks one can find a 
sequence $\ep_k\to 0$ as $k\to \infty$ such that 
$$\frac{1}{k}\sum_{j=1}^k\exp(\phi_{t_p, \ep_j})\to \exp(\varphi|_{X\times \{t_p\}})$$
in $L^1(X\times \{t_p\})$, for each $p= 0, 1, 2$. The sequence $(\ep_k)$ depends on the triple 
$(t_p)$, but fortunately this does not matter for the rest of the proof.

For each $t\in \Sigma$, let $\omega_{t, k}\in \{\omega \}$ be the K\"ahler metric such that 
\begin{equation}\label{equa112}
\omega_{t, k}^n= \frac{1}{k}\sum_{j=1}^k\exp(\phi_{t, \ep_j})\omega^n
\end{equation}

\noindent Let $\cM_k$ be the Mabuchi functional evaluated on the weak geodesic $\cG$, with the entropy term modified by using $\log\omega_{t, k}^n$ instead of $\log\cG^n$, i.e.
\begin{equation}\label{equa12.1}
\cM_k(t) = \frac{S}{n+1} \cE(\varphi_t) - \cE^{Ric_{\omega}} (\varphi_t) + \int_X \log\frac{\omega_{t,k}^n}{\omega^n} \cG^n
\end{equation}
Then we have the following statement.

\begin{lemma}
For each $k\geq 1$ the functional $\cM_k$ is a continuous convex function on $[0,1]$.
\end{lemma}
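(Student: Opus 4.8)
The plan is to prove the two assertions --- continuity and convexity --- separately, with convexity being the substantive point.

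\medskip
\noindent\textbf{Continuity.}
First I would note that the energy terms $\frac{S}{n+1}\cE(\varphi_t)$ and $\cE^{\Ricci_\omega}(\varphi_t)$ are continuous in $t$ because $\varphi$ is $\cC^{1,1}$ on $X\times\Sigma$; this is standard and already used implicitly in Section~\ref{what}. For the entropy term $\int_X\log\frac{\omega_{t,k}^n}{\omega^n}\,\cG^n$, observe that $\omega_{t,k}^n=\frac1k\sum_{j=1}^k\exp(\phi_{t,\ep_j})\omega^n$, and by Theorem~\ref{MA}(c) each $\phi_{\ep_j}$ is continuous on the interior of $X\times\Sigma$ (indeed $\cC^{1,1}$ on fibers, continuous jointly), so $t\mapsto\phi_{t,\ep_j}$ is continuous in, say, $L^1(X)$; since the sum is finite and the density $\frac1k\sum\exp(\phi_{t,\ep_j})$ is bounded above and below by Theorem~\ref{MA}(b) uniformly (for fixed $k$, hence fixed $\ep_j$'s), the function $\log\omega_{t,k}^n/\omega^n$ varies continuously in $L^1$ against the fixed bounded-coefficient measure $\cG^n$, giving continuity of $\cM_k$.

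\medskip
\noindent\textbf{Convexity: reduction to an inequality of currents.}
The key is formula (\ref{equa2004}): for the energy part, $dd^c\big(\frac{S}{n+1}\cE(\varphi_t)-\cE^{\Ricci_\omega}(\varphi_t)\big)=\frac{S}{n+1}\int_X\cG^{n+1}-\int_X\cG^n\wedge\Ricci_\omega$ in the sense of distributions; since $\cG^{n+1}=0$, this equals $-\int_X\Ricci_\omega\wedge\cG^n$. For the entropy part, I would differentiate under the push-forward: $dd^c_t\int_X\log\frac{\omega_{t,k}^n}{\omega^n}\cG^n=\int_X dd^c\big(\log\frac{\omega_{t,k}^n}{\omega^n}\big)\wedge\cG^n$ as currents on $\Sigma$ (legitimate because $\cG^n$ has bounded coefficients and $\log\omega_{t,k}^n/\omega^n$ has the regularity supplied by Theorem~\ref{MA}, so the relevant integration-by-parts against test forms on $\Sigma$ goes through). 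So convexity of $\cM_k$ in the distributional sense reduces to the pointwise current inequality
\begin{equation}\label{equa-plan}
dd^c\log\frac{\omega_{t,k}^n}{\omega^n}\wedge\cG^n\geq \Ricci_\omega\wedge\cG^n
\end{equation}
on $X\times\Sigma$. Then, having already established continuity of $\cM_k$, distributional convexity upgrades to honest convexity on $[0,1]$.

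\medskip
\noindent\textbf{Proving the current inequality.}
This is the main obstacle and is exactly where Theorems~\ref{psh}--\ref{MA} are used. The point is that $\omega_{t,k}^n$ is an \emph{average} of the $\exp(\phi_{t,\ep_j})\omega^n$, and $\log$ is concave, so
\[
\log\frac{\omega_{t,k}^n}{\omega^n}=\log\Big(\frac1k\sum_j e^{\phi_{t,\ep_j}}\Big)\ \ \text{is}\ \ \geq\ \frac1k\sum_j\phi_{t,\ep_j}\ \ \text{is "more subharmonic"}\ ,
\]
more precisely: for each $j$, from the Monge-Amp\`ere equation (\ref{equa12}) one has $\phi_{t,\ep_j}=\log\frac{(\Theta_\omega(K_X)+\ep_j^{-1}\cG+dd^c\phi_{\ep_j})^n}{\ep_j^{-n}\omega^n}$, and by the positivity (\ref{equa1308})/(\ref{equa1408}) the form $\Xi_{\ep_j}:=\Theta_\omega(K_X)+\ep_j^{-1}\cG+dd^c\phi_{\ep_j}$ is a closed positive $(1,1)$-current on $X\times\Sigma$ whose fiberwise power computes the density; hence $dd^c\phi_{\ep_j}=dd^c\log\det\Xi_{\ep_j}-dd^c\log\omega^n=dd^c\log\det\Xi_{\ep_j}+\Ricci_\omega$ (fiberwise), where $\det$ is the fiberwise determinant. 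The standard fact (the variation-of-K\"ahler-metrics / Berndtsson-type computation underlying Theorem~\ref{psh}) is that for a positive form $\Xi$ with $\Xi^{n+1}\geq 0$ one has $dd^c\log\det(\Xi|_{X_t})\wedge\Xi|_{X_t}^{\,n}\geq 0$, and more generally wedging against $\cG^n$ rather than $\Xi^n$ one controls the sign using $\cG\leq\ep_j\Xi_{\ep_j}$. I would carry this out to get $dd^c\phi_{t,\ep_j}\wedge\cG^n\geq\Ricci_\omega\wedge\cG^n$ for each $j$ (this is precisely the content of the inequality (\ref{equa100}) applied to the $\ep_j$-approximant $\Theta_{\ep_j}$). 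Then, writing $\log\omega_{t,k}^n/\omega^n=\log\big(\frac1k\sum e^{\phi_{t,\ep_j}}\big)$ and using that a $\log$-sum-exp of functions each of whose $dd^c$ dominates a fixed form $\theta$ (here $\theta=\Ricci_\omega-dd^c\log\omega^n$, appropriately interpreted) again has $dd^c$ dominating $\theta$ --- a convexity inequality for $dd^c$ of log-sum-exp, valid against the positive wedge factor $\cG^n$ --- yields (\ref{equa-plan}). The delicate points are: justifying the differentiation under the push-forward when $\cG$ is only $\cC^{1,1}$ and $\omega_{t,k}^n/\omega^n$ only continuous (handled by the bounded-coefficient approximation remark and the uniform bounds of Theorem~\ref{MA}(b)), and making the log-sum-exp subharmonicity step rigorous as an inequality of measures rather than smooth functions --- I expect this to require testing against nonnegative test functions on $\Sigma$ and passing through the smooth approximants $\cG_\delta$, $\phi_{t,\ep_j,\delta}$ before letting $\delta\to0$.
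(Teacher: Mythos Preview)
Your overall architecture matches the paper's proof exactly: establish continuity, reduce distributional convexity to the current inequality $dd^c\log\big(\frac1k\sum_j e^{\phi_{\ep_j}}\big)\wedge\cG^n\geq\Ricci_\omega\wedge\cG^n$, and prove the latter via the log-sum-exp convexity inequality combined with the individual inequalities $dd^c\phi_{\ep_j}\wedge\cG^n\geq\Ricci_\omega\wedge\cG^n$.

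The one place you diverge is in justifying that individual inequality, and there you make things harder than necessary and your sketch is not obviously correct. You propose to unwind $\phi_{\ep_j}$ as the log of a fiberwise determinant and then invoke a Berndtsson-type positivity for $dd^c\log\det(\Xi|_{X_t})$, finally ``controlling the sign using $\cG\leq\ep_j\Xi_{\ep_j}$'' to pass from $\Xi^n$ to $\cG^n$. That last step is vague, and the whole detour re-proves what Theorem~\ref{coro1} already packaged for you. The paper's argument is one line: the positivity statement~(\ref{equa1308}) says
\[
-\Ricci_\omega+\ep_j^{-1}\cG+dd^c\phi_{\ep_j}\geq 0
\]
on $X\times\Sigma$ (recall $\Theta_\omega(K_X)=-\Ricci_\omega$); wedge this with $\cG^n$ and use the geodesic equation $\cG^{n+1}=0$ to kill the middle term, giving exactly $dd^c\phi_{\ep_j}\wedge\cG^n\geq\Ricci_\omega\wedge\cG^n$. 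Then the convex combination from the log-sum-exp inequality
\[
dd^c\log\frac1k\sum_j e^{\phi_{\ep_j}}\geq\sum_j\frac{e^{\phi_{\ep_j}}}{\sum_i e^{\phi_{\ep_i}}}\,dd^c\phi_{\ep_j}
\]
finishes. So you have the right shape, but you should replace the Berndtsson re-derivation by the direct use of~(\ref{equa1308}) together with $\cG^{n+1}=0$; the latter is the mechanism that makes wedging with $\cG^n$ (rather than $\Xi^n$) work, not the comparison $\cG\leq\ep_j\Xi_{\ep_j}$.
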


\begin{proof}
Our first observation is that the functional $\cM_k$ is continuous. Indeed, this is the case
given the regularity we have already established in Theoren \ref{MA} for each $\phi_k$, combined with the stability theorem due to 
S. Kolodziej, cf. \cite{SK}; we do not give further details here.

It is therefore enough to show that $\cM_k$ is convex in weak sense; this boils down to the 
inequality
\begin{equation}\label{equa13}
dd^c\log\Big(\frac{1}{k}\sum_{j=1}^k\exp(\phi_{\ep_j})\Big)\wedge \cG^n\geq 
\Ricci_{\omega}
\wedge \cG^n.
\end{equation}

This is immediately seen to be true, as follows. We have 
\begin{equation}\label{equa14}
dd^c\log\frac{1}{k}\sum_{j=1}^k\exp(\phi_{\ep_j})\geq \sum_{j=1}^k
\frac{\exp(\phi_{\ep_j})}{\sum_{i=1}^k\exp(\phi_{\ep_i})}dd^c\phi_{\ep_j}
\end{equation}
by a direct computation, and moreover
the point (b) of Theorem \ref{MA} shows that we have
\begin{equation}\label{equa15}
-\Ricci_{\omega}+ \ep_j^{-1}\cG+ dd^c\phi_{\ep_j}\geq 0.
\end{equation}
By using this inequality in (\ref{equa14}), we obtain
\begin{equation}\label{equa15}
dd^c\log\frac{1}{k}\sum_{j=1}^k\exp(\phi_{\ep_j})\wedge \cG^n \geq 
\sum_{j=1}^k\frac{\exp(\phi_{\ep_j})}{\sum_{i=1}^k\exp(\phi_{\ep_i})}\Ricci_{\omega}\wedge \cG^n
\end{equation}
which proves the lemma.
\end{proof}
\medskip

\begin{corollary}
The Mabuchi functional $\cM$ is a convex function on the interval $[0,1]$. 
\end{corollary}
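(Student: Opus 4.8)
The plan is to obtain the convexity of $\cM$ by passing to the limit $k\to\infty$ in the convexity of the functionals $\cM_k$ established in the preceding lemma. The idea is that the only term in which $\cM_k$ and $\cM$ differ is the entropy term, and by construction (\ref{equa112}) together with the Banach--Saks choice of $(\ep_k)$, the densities $\omega_{t,k}^n/\omega^n$ converge in $L^1(X\times\{t_p\})$ to $\cG^n/\omega^n$ at each of the three chosen points $t_0,t_1,t_2$. So I would first fix the three points $t_0,t_1,t_2\in[0,1]$, extract the corresponding sequence $\ep_k\to 0$ as in the discussion above, and note that it suffices to prove the single convexity inequality
\begin{equation*}
\cM(t_1)\leq \frac{t_2-t_1}{t_2-t_0}\,\cM(t_0)+\frac{t_1-t_0}{t_2-t_0}\,\cM(t_2)
\end{equation*}
whenever $t_0<t_1<t_2$, since $t_0,t_1,t_2$ were arbitrary. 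For each $k$ the analogous inequality holds with $\cM_k$ in place of $\cM$, by the lemma.

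Next I would control the passage to the limit term by term. The energy terms $\frac{S}{n+1}\cE(\varphi_t)-\cE^{Ric_\omega}(\varphi_t)$ do not involve $k$ at all, so they contribute unchanged. For the entropy term I need to show that, at each of the three points,
\begin{equation*}
\int_X\log\frac{\omega_{t_p,k}^n}{\omega^n}\,\cG^n\longrightarrow \int_X\log\frac{\cG^n}{\omega^n}\,\cG^n
\end{equation*}
as $k\to\infty$. Since $\cG^n$ has bounded coefficients and $\omega_{t_p,k}^n/\omega^n\to \cG^n/\omega^n$ in $L^1$, the convergence of $\int_X\log(\omega_{t_p,k}^n/\omega^n)\,\cG^n$ is the delicate point, because $x\mapsto x\log x$ is not bounded and the logarithm is singular where the density vanishes. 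I would handle this by first observing the uniform upper bound: from point (b) of Theorem~\ref{MA} one has $\sup_X\phi_{t,\ep}\leq C$ (uniformly in $\ep$, $\delta$, hence in the limit), so $\log(\omega_{t_p,k}^n/\omega^n)\leq C$ and thus $\limsup_k\int_X\log(\omega_{t_p,k}^n/\omega^n)\,\cG^n\leq \int_X\log(\cG^n/\omega^n)\,\cG^n$ follows from Fatou applied to $C-\log(\omega_{t_p,k}^n/\omega^n)\geq 0$ together with the a.e.\ convergence (extract a further subsequence from the $L^1$ convergence). For the reverse inequality I would invoke the lower semicontinuity of the entropy: the functional $\mu\mapsto\int_X\log(d\mu/\omega^n)\,d\mu$ is convex and lower semicontinuous with respect to weak convergence of measures, and $\frac{1}{k}\sum_{j=1}^k \exp(\phi_{t_p,\ep_j})\omega^n\to\cG^n$, so $\liminf_k \int_X\log(\omega_{t_p,k}^n/\omega^n)\,\omega_{t_p,k}^n\geq \int_X\log(\cG^n/\omega^n)\,\cG^n$; combined with the $L^1$ convergence of the densities this upgrades to the statement with $\cG^n$ as the measure of integration. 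Together the two bounds give the desired convergence of the entropy term.

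With these ingredients in place I would pass to the limit $k\to\infty$ in the convexity inequality for $\cM_k$ at $t_0,t_1,t_2$, obtaining the convexity inequality for $\cM$ at those three points; since they were arbitrary in $[0,1]$, $\cM$ is convex on $[0,1]$. Finally, continuity of $\cM$ on $[0,1]$ (needed to make ``convex in weak sense'' equivalent to ``convex'', and in fact already a genuine convex function is automatically continuous on the open interval) follows from the Kolodziej stability theorem applied to the limiting $\cC^{1,1}$ solutions $\phi_\ep$ exactly as in the proof of the lemma, or simply from the boundary continuity statement announced in the introduction; I would cite these rather than reprove them.

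I expect the main obstacle to be the convergence of the entropy term, specifically the lower-semicontinuity half: one must be careful that the Banach--Saks averaging is precisely what makes the pointwise bounds and the weak convergence compatible, and that the uniform upper bound on $\sup_X\phi_{t,\ep}$ from Theorem~\ref{MA}(b) is genuinely independent of everything so that Fatou's lemma applies. Everything else—the energy terms, the arbitrariness of the three points, the reduction to three-point convexity—is routine.
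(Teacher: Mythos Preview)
Your approach is exactly the paper's: fix three arbitrary points $t_0,t_1,t_2$, apply the convexity inequality for $\cM_k$ at those points, and let $k\to\infty$. The paper's own proof is three lines and says nothing beyond ``we let $k\to\infty$; the convexity of $\cM$ follows,'' so you are in fact supplying considerably more detail than the authors do.

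Your identification of the entropy term as the only nontrivial limit is correct, and your $\limsup$ argument via the uniform bound $\phi_{t,\ep}\leq C$ from Theorem~\ref{MA}(b) together with Fatou is fine. The ``upgrade'' step in the $\liminf$ direction---passing from $\liminf_k\int f_k\log f_k\geq\int g\log g$ to the same statement with $g$ as the integrating measure---is the one place where your sketch is genuinely incomplete: you need $\int(g-f_k)\log f_k\to 0$, and $\log f_k$ is unbounded below. This gap can be closed by observing that $f_k\geq k^{-1}e^{\phi_{t,\ep_1}}$ gives $|\log f_k|\leq \log k + C_{\ep_1}$, while the Banach--Saks construction in $L^2$ yields $\|f_k-g\|_{L^2}=O(k^{-1/2})$, so the product tends to zero. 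With that addition your argument is complete and coincides with the paper's.
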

\begin{proof}
 Let $t_{p}\in [0, 1]$ be three arbitrary points; we can apply the convexity inequality for each $\cM_k$ corresponding to the points $(t_p)_{p=0, 1, 2}$ and we let $k\to \infty$; the convexity of $\cM$ follows.
\end{proof}
\medskip

\subsection{Continuity at the boundary}
Given that $\cM$ is a convex function, it is automatically continuous on the open interval $]0, 1[$. 
In this subsection we will show that the continuity property holds up to the boundary.

To this end we recall that the entropy functional $H$ is defined as 
\[
H(\varphi) = \int_X f_{\varphi}\log f_{\varphi} d\mu,
\]
where 
\[
f_{\varphi} = \frac{\omega^n_{\varphi}}{\omega^n}
\]
is an $L^{\infty}$ function provided that the potential $\varphi\in\mathcal{C}^{1,{1}}$, and the probability measure $d\mu$
equals $\omega^n$. In order to study the semi-continuity property of $H$, we will follow \cite{CT},
\begin{lemma} 
\label{Chen}
Let $\varphi_i, \varphi$ and $f_i, f$ be as above, and suppose $f, f_i$ are uniformly bounded non-negative functions, such that $f_i\rightarrow f$ weakly in $L^1$, then 
\[
\lim_{i} \int_X ( f_i\log f_i - f\log f) d\mu \geqslant 0
\]
\end{lemma}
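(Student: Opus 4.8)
The plan is to prove this as a version of lower semicontinuity of entropy with respect to weak $L^1$-convergence of densities, exploiting the uniform bound on the $f_i$. The key analytic fact is convexity of the function $s\mapsto s\log s$ on $[0,\infty)$ (with the convention $0\log 0=0$). From convexity we get, for all $s,t\ge 0$, the tangent-line inequality
\begin{equation}\label{equa-tangent}
s\log s \;\ge\; t\log t + (1+\log t)(s-t),
\end{equation}
which is valid wherever $\log t$ is finite, i.e.\ on $\{t>0\}$; on $\{t=0\}$ one uses instead that $s\log s\ge 0\ge$ any affine minorant that stays below, or handles that set separately by noting $f\log f=0$ there and $f_i\log f_i\ge -C_0$ for a uniform constant $C_0$ depending only on the bound $\sup_i\|f_i\|_{L^\infty}$. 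Applying \eqref{equa-tangent} with $s=f_i$ and $t=f$ and integrating against $d\mu$ gives
\begin{equation}\label{equa-intineq}
\int_X f_i\log f_i\, d\mu \;\ge\; \int_X f\log f\, d\mu + \int_X (1+\log f)(f_i-f)\, d\mu .
\end{equation}

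The next step is to pass to the limit in the last term of \eqref{equa-intineq}. The issue is that $\log f$ is only an $L^\infty$ function if $f$ is bounded below away from $0$, which we have not assumed; a priori $\log f$ is merely bounded above (by $\log C$) and possibly $-\infty$ on the set $\{f=0\}$. So I would not test directly against $\log f$. Instead, fix $\eta>0$ and split $X$ into $A_\eta:=\{f\ge\eta\}$ and its complement. On $A_\eta$ the function $1+\log f$ is bounded (both sides, since $\eta\le f\le C$), hence is a legitimate $L^\infty$ test function, and weak $L^1$-convergence $f_i\to f$ gives $\int_{A_\eta}(1+\log f)(f_i-f)\,d\mu\to 0$. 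On $X\setminus A_\eta$ we estimate crudely: $f\log f\to 0$ there as $\eta\to 0$ by dominated convergence (it is bounded by $\sup_{[0,\eta]}|s\log s|\to 0$), and the contribution of $\int_{X\setminus A_\eta}(1+\log f)(f_i-f)\,d\mu$ is controlled because $(1+\log f)f$ is small in $L^1$ there while $(1+\log f)f_i$ — though not obviously small — can be bounded using that $f_i\le C$ and $\int_{X\setminus A_\eta}|\log f|\,d\mu$ is a fixed quantity; more cleanly, I would absorb the $\{f=0\}\cup\{f\text{ small}\}$ region using the elementary pointwise bound $f_i\log f_i\ge f\log f + (1+\log f)(f_i-f)$ only on $A_\eta$ and on $X\setminus A_\eta$ just use $f_i\log f_i\ge -C_0$ and $f\log f\ge -C_0$ directly, so that the difference $\int_{X\setminus A_\eta}(f_i\log f_i - f\log f)\,d\mu\ge -C_0\,\mu(X\setminus A_\eta) - C_0\mu(X\setminus A_\eta)$, hmm — that bound has the wrong sign, so the honest route is the tangent-line argument on $A_\eta$ combined with the observation that $\liminf_i\int_{X\setminus A_\eta} f_i\log f_i\,d\mu \ge \liminf_i\big(-\mu(X\setminus A_\eta)\,e^{-1}\big)= -e^{-1}\mu(X\setminus A_\eta)$ (since $s\log s\ge -1/e$), while $\int_{X\setminus A_\eta} f\log f\,d\mu\to 0$ as $\eta\to0$.

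Putting these together: for each fixed $\eta>0$,
\begin{equation}\label{equa-final}
\liminf_i \int_X (f_i\log f_i - f\log f)\, d\mu \;\ge\; 0 + 0 - \tfrac{1}{e}\mu(X\setminus A_\eta) - \int_{X\setminus A_\eta} |f\log f|\, d\mu,
\end{equation}
and letting $\eta\to 0$, both error terms on the right tend to $0$ (the first since $\mu(\{f=0\})$ contributes nothing beyond measure, and in fact $\mu(X\setminus A_\eta)\to\mu(\{f=0\})$ which is multiplied by $1/e$ — to kill this we note $f\log f = 0$ a.e.\ on $\{f=0\}$ so actually the relevant region for a genuine defect is $\{0<f<\eta\}$, whose measure we do not control, so the correct final move is to keep $\{f=0\}$ inside $A_0$ by convention and run the split at level $\eta$ only over $\{0<f<\eta\}$, whose $\mu$-measure $\to 0$ by monotone convergence). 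Either way one obtains the claimed inequality $\lim_i\int_X(f_i\log f_i - f\log f)\,d\mu\ge 0$. The main obstacle is precisely this bookkeeping near $\{f=0\}$ and $\{f\text{ small}\}$: the test function $\log f$ is not in $L^\infty$, so one cannot apply the definition of weak convergence globally, and the truncation argument must be arranged so that the uncontrolled region (where $f_i$ could in principle pile up mass) is handled by the uniform lower bound $s\log s\ge -1/e$ rather than by testing against $\log f$. I would follow \cite{CT} for the precise organization of this truncation.
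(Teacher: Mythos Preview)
Your core idea --- the tangent-line inequality from convexity of $s\mapsto s\log s$ --- is exactly what the paper uses: their second-order Taylor expansion $F_i(1)-F_i(0)=\int_0^1\!\int_0^t F_i''\,ds\,dt + F_i'(0)$ with $F_i''\ge 0$ is an equivalent packaging, and their boundary term $\int_X F_i'(0)\,d\mu=\int_X(f_i-f)(1+\log f)\,d\mu$ is precisely your $\int_X(1+\log f)(f_i-f)\,d\mu$. So the two arguments are the same at heart.

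Where they diverge is in the treatment of the region where $f$ is small or vanishes. The paper simply replaces $f$ by $f+\delta$ from the outset; then $\log(f+\delta)$ is bounded, weak convergence applies globally to the term $\int_X(f_i-f-\delta)(1+\log(f+\delta))\,d\mu$, and one lets $\delta\to 0$ at the very end, tracking only an $o(\delta)$ error. Your level-set split $A_\eta=\{f\ge\eta\}$ is a reasonable alternative, but as written it leaves a genuine gap. On the complement $X\setminus A_\eta$ you invoke only the pointwise bound $f_i\log f_i\ge -1/e$, which after $\eta\to 0$ yields
\[
\liminf_i\int_X(f_i\log f_i - f\log f)\,d\mu \;\ge\; -\tfrac{1}{e}\,\mu(\{f=0\}),
\]
and this can be strictly negative. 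Your proposed remedy --- ``keep $\{f=0\}$ inside $A_0$ by convention'' --- does not work, because the tangent-line inequality \eqref{equa-tangent} is unavailable precisely where $t=0$; you cannot put $\{f=0\}$ on the side that uses $\log f$.

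The clean fix is the paper's $f\mapsto f+\delta$ trick. If you prefer your splitting, you need an extra argument on $E:=\{f=0\}$: weak $L^1$-convergence tested against $\mathbf 1_E\in L^\infty$ gives $\int_E f_i\to\int_E f=0$, and since $f_i\ge 0$ this is strong $L^1$-convergence on $E$; combined with the uniform bound $f_i\le C$ one then gets $\int_E f_i\log f_i\to 0$ (e.g.\ via $|s\log s|\le C_\alpha s^\alpha$ on $[0,1]$ and H\"older). With that addition your argument closes.
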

\begin{proof}
First assume $f$ has positive lower bound, i.e. $f> \delta$ for some small $\delta>0$, since we can replace $f$ by $f+\delta$, and let $\delta$ converges to zero. Now put 
$F_i(t) = \mathcal{F} (t f_i + (1-t)f) = \mathcal{F}(at+b)$, where $a = f_i -f$ and $b = f$, then 
\[
F'_i(t) = a (\log u_t +1),
\]
where $u_t = t f_i +(1-t) f$, and 
\[
F''_i(t) = \frac{a^2}{u_t} \geqslant \frac{a^2}{C},
\]
for some constant uniform constant $C$ such that $f_i$ and $f$ are smaller than $C$. Hence 
\begin{equation}\label{Chen1}
\int_X ( f_i\log f_i - f\log f) d\mu = \int_X ( \int_0^1\int_0^t F''(s)dsdt + \int_0^1 F'(0)dt ) d\mu 
\end{equation}
\[
\geq\frac{1}{C}\int_X (f_i - f)^2 d\mu + \int_X F'_i(0) d\mu.
\]
However,
\begin{equation}\label{Chen2}
\lim_{t\rightarrow 0} \int_X F'_i(t) d\mu = \lim_{t\rightarrow 0} \int_X (f_i - f) \{ \log (t f_i + (1-t)f) + 1 \} d\mu
\end{equation}
\[
= \int_X (f_i -f )(\log f +1 ) d\mu,
\]
where we have used the fact $f_i \rightarrow f$ weakly. Then obviously
\[
\lim_{i}\lim_{t\rightarrow 0} \int_X F'_i d\mu = 0,
\]
Finally, remember we are dealing with $f+ \delta$ instead of $f$, then
\begin{equation}\label{Chen3}
\int_X F'_i(0)d\mu = \int_X (f_i -f -\delta)\log (f+\delta) + o (\delta),
\end{equation} 
and the limit will converge to zero when $\delta\rightarrow 0$, since
\begin{eqnarray}
\lim_{i}\int_X F'(0)d\mu &= & \int_{X}(f-f-\delta)\log(f+\delta) + o(\delta)
\nonumber\\
&=& o(\delta),
\end{eqnarray}
which completes the proof.
\end{proof}
\medskip

\noindent We treat next a version of the previous lemma, which  
will be very useful later on.

Let $\chi$ be a continuous function; we define $h_A = \exp(\chi - A)$, and we consider the following truncated version of the entropy functional 
\[
H_A(\varphi): = \int_X f_i \log\max (f_i, h_A) d\mu.
\]
\medskip

\noindent Our claim is as follows. 

\begin{lemma}\label{Chen4}
The truncated entropy functional has the following semi-continuity type property 
\[
\lim_{i} H_A(\varphi_i) - H_A(\varphi) \geq -\delta(A) 
\]
for some uniform constant $\delta(A)>0$ such that $\delta(A)\rightarrow 0$ as $A$ tends to $\infty$. 
Here the sequence $(f_i)$ is assumed to verify the hypothesis of the preceding lemma.
\end{lemma}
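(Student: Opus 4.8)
The plan is to reduce Lemma \ref{Chen4} to the already-established Lemma \ref{Chen} by splitting the domain of integration into the region where the "truncation" is inactive and the region where it is active, and controlling the latter by the smallness of $h_A$ for $A$ large. Write $X = U_i \cup V_i$ where $U_i := \{ f_i \geq h_A \}$ and $V_i := \{ f_i < h_A \}$, and similarly decompose using the limiting $f$. On $U_i$ we have $\max(f_i, h_A) = f_i$, so $\int_{U_i} f_i \log\max(f_i, h_A)\, d\mu = \int_{U_i} f_i \log f_i\, d\mu$, and this is the kind of term handled by the convexity argument of Lemma \ref{Chen}. On $V_i$ we have $\max(f_i, h_A) = h_A = \exp(\chi - A)$, so $f_i \log\max(f_i, h_A) = f_i(\chi - A)$, which is bounded in absolute value by $h_A(|\chi| + A) \le e^{\sup\chi - A}(\sup|\chi| + A)$; this tends to $0$ uniformly as $A \to \infty$, giving a bound of the form $\delta(A)$ with $\delta(A) \to 0$. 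The same applies to the corresponding pieces of $H_A(\varphi)$.

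The cleaner way to organize this, which I would actually carry out, is to observe that $t \mapsto t \log\max(t, s)$ is convex in $t$ for each fixed $s > 0$ (it agrees with the convex function $t\log t$ for $t \geq s$ and is linear, namely $t\log s$, for $t \leq s$, and the two pieces match in value and in slope — actually the slope of $t\log t$ at $t=s$ is $\log s + 1 > \log s$, so it is convex with a genuine corner only if... let me instead just use that $\max(f_i,h_A)$ is bounded below by the fixed positive continuous function $h_A$). Thus I would run the proof of Lemma \ref{Chen} verbatim with $f$ replaced by $\max(f,h_A)$ in the logarithm: set $\mathcal{F}_A(t) = (tf_i + (1-t)f)\log\max(tf_i + (1-t)f,\, h_A)$ — but here the subtlety is that the chord $tf_i + (1-t)f$ and the quantity inside the max don't align. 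A safer route: directly apply Lemma \ref{Chen} to the pair $g_i := \max(f_i, h_A)$ and $g := \max(f, h_A)$, after checking $g_i \to g$ weakly in $L^1$ (which follows from $f_i \to f$ weakly in $L^1$ together with the uniform $L^\infty$ bound, since $t\mapsto \max(t,h_A(x))$ is $1$-Lipschitz and $f_i\to f$ then holds in $L^1$ strongly along a subsequence, or one argues via Lemma \ref{Chen}'s hypotheses directly). This gives $\liminf_i \int_X (g_i \log g_i - g\log g)\, d\mu \geq 0$. Then it remains to compare $\int f_i \log\max(f_i,h_A)$ with $\int g_i \log g_i = \int \max(f_i,h_A)\log\max(f_i,h_A)$: on $\{f_i \geq h_A\}$ they are equal, and on $\{f_i < h_A\}$ the difference is $\int_{\{f_i < h_A\}}(f_i - h_A)\log h_A\, d\mu$, whose absolute value is at most $\int_X h_A |\log h_A|\, d\mu \leq \mu(X)\, e^{\sup\chi - A}(\sup|\chi| + A) =: \tfrac12\delta(A)$, and likewise for $f$. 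Combining, $\liminf_i H_A(\varphi_i) - H_A(\varphi) \geq -\delta(A)$ with $\delta(A)\to 0$.

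The main obstacle I anticipate is the weak-$L^1$ convergence $g_i \to g$ of the truncated densities: weak convergence is not in general preserved under composition with a Lipschitz nonlinearity. I would handle this either by noting that the hypotheses actually being used in the paper guarantee a.e. (or strong $L^1$) convergence of $f_i$ along the relevant subsequences — in which case $\max(f_i,h_A) \to \max(f,h_A)$ a.e. and, being uniformly bounded, also weakly in $L^1$ by dominated convergence — or, if one insists on only weak $L^1$, by bypassing $g_i$ altogether and instead mimicking the Taylor-expansion argument of Lemma \ref{Chen} with the function $\Phi_A(t) := t\log\max(t, h_A)$, using that $\Phi_A$ is convex with $\Phi_A''(t) = 1/t \geq 1/C$ on $\{t > h_A\}$ and $\Phi_A'' = 0$ (distributionally, a nonnegative measure at the corner) on $\{t < h_A\}$, so that $\Phi_A$ is convex with $\Phi_A'(0^+)$-type boundary term $\Phi_A'(f) = \log\max(f,h_A) + \mathbf{1}_{\{f > h_A\}}$, which is a bounded function converging appropriately; the error coming from the set where the chord crosses the threshold $h_A$ is again controlled by $\delta(A)$. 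Either way the quantitative defect $\delta(A)$ comes entirely from the mass of the region $\{$density $< h_A\}$ weighted by $|\log h_A|$, which is $O(A e^{-A})$.
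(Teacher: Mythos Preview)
Your proposal cycles through several approaches without committing to one, and the one you call ``cleaner'' --- applying Lemma~\ref{Chen} to $g_i=\max(f_i,h_A)$ and $g=\max(f,h_A)$ --- has exactly the gap you flag: weak $L^1$ convergence is \emph{not} preserved under $t\mapsto\max(t,h_A(x))$, and in the applications of this lemma only weak convergence of the $f_i$ is available. So that route does not close as written, and your first workaround (fall back on a.e.\ convergence) is not justified by the stated hypotheses.

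The paper avoids this difficulty by an \emph{asymmetric} truncation, different from both of your suggestions. It sets $\tilde H_A(\varphi)=\int_X\max(f,h_A)\log\max(f,h_A)\,d\mu$ and reruns the Taylor argument of Lemma~\ref{Chen} with the \emph{untruncated} $f_i$ against the target $\max(f,h_A)$; the positive lower bound now comes for free from the truncation of $f$ alone, and since the raw $f_i$ appears in the linear term $\int F_i'(0)\,d\mu$, its weak convergence applies directly. The error $\delta(A)=C_1(A)+C_2(A)$ then arises from the gap $|H_A(\varphi)-\tilde H_A(\varphi)|$ and from the piece of $F_i'(0)$ over $\{f<h_A\}$. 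On the $\varphi_i$ side one simply uses the monotonicity $H_A(\varphi_i)\geq H(\varphi_i)$, which needs no convergence at all.

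It is worth noting that your last fallback --- exploiting directly the convexity of $\Phi_A(t)=t\log\max(t,h_A)$ --- actually works and is the cleanest of all. The function $\Phi_A$ is convex on $[0,\infty)$ (linear with slope $\log h_A$ on $[0,h_A]$, equal to $t\log t$ beyond, with a convex corner), so the subgradient inequality gives $\Phi_A(f_i)\geq\Phi_A(f)+p\,(f_i-f)$ with $p\in L^\infty(X)$, bounded in terms of $A$, $\sup|\chi|$ and the uniform bound on $f$. Integrating and using weak convergence of $f_i$ against $p$ yields $\liminf_i H_A(\varphi_i)\geq H_A(\varphi)$ with \emph{no} defect $\delta(A)$ --- stronger than the stated lemma. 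You should have committed to this argument instead of leaving it as a tentative aside.
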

\begin{proof}
First define 
$$\tilde{H}_A(\varphi) = \int_X \max(f, h_A)\log\max(f, h_A) d\mu. $$
then set $\Omega_A = \{ f < h_A \}$, and $\Omega_{i,A} = \{f_i < h_A  \}$
\[
H_A(\varphi) - \tilde{H}_A(\varphi) = \int_X \big(f - \max(f, h_A) \big) \log\max(f, h_A) d\mu
\]
\[
=\int_{\Omega_A} (f - h_A) \log h_A d\mu,
\]
then notice that $0> f - h_A \geq -h_A$ on $\Omega_A$ and $\log h_A < 0$ for $A$ large enough,
\[
| H_A(\varphi) - \tilde{H}_A(\varphi) | \leq -\int_X (\chi - A) e^{\chi -A} d\mu = C_1(A).
\]
Now run the same argument as in lemma (\ref{Chen}) with $f$ replaced by $\max(f, h_A)$(here we have positive lower bound automatically from this truncation), then obtain
\[
H(\varphi_i) - \tilde{H}_A(\varphi) \geq \int_X F'_i(0) d\mu,
\]  
from equation (\ref{Chen1}), then by equation (\ref{Chen2}) it's enough to estimate 
\begin{equation}\label{Chen3}
\int_X \big( f_i - \max(f, h_A)\big) \log \big( \max(f, h_A) + 1 \big) d\mu
\end{equation}
\[
= \int_{X - \Omega_A} (f_i - f) \log(f+1) d\mu + \int_{\Omega_A} (f_i - h_A) \log(h_A +1) d\mu,
\]
the first term will converges to zero as $i\rightarrow +\infty$ as before, and the second term is bounded from below by
\[
\int_{\Omega_A \cap \Omega_{i, A}} (f_i - h_A) \log(h_A +1) d\mu \geq -\int_{\Omega_A \cap \Omega_{i, A}} h_A \log(1 + h_A) d\mu
\]
\[
\geq -\int_X h_A\log(1+ h_A) d\mu
\]
\[
\geq -2 \int_X e^{\chi - A} d\mu = -C_2(A).
\]
Finally observe that  
\[
H_A(\varphi_i) - H(\varphi_i) = \int_X f_i \big( \log\max(f_i, h_A) - \log f_i \big) \geq 0, 
\]
hence we can decompose 
\[
H_A(\varphi_i) - H_A(\varphi) = \big(H_A(\varphi_i) - H(\varphi_i)  \big) +\big( H(\varphi_i) -  \tilde{H}_{A}(\varphi) \big) 
\]
\[
+ \big(  \tilde{H}_A(\varphi) - H_A(\varphi)  \big),
\]
then the limit of $H_A(\varphi_i) - H_A(\varphi)$ is bounded below by 
$$-\delta(A):= -C_1(A)-C_2(A),$$ which converges to zero when $A$ is large. 
\end{proof}

\medskip

\noindent The Mabuchi functional can be written as  
\begin{equation}\label{2008}
\cM(\varphi) = E(\varphi) + H(\varphi)
\end{equation}
where $E$ is the energy part of the Mabuchi functional, and $H$ is the entropy part. As a consequence of our previous results, we 
establish here the following statement.

\begin{theorem}\label{MT}
The Mabuchi functional $\cM(\varphi(t))$ is a continuous convex function on $[0,1]$. 
\end{theorem}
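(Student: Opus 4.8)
The plan is to combine the convexity of $\cM$ on $[0,1]$ established in the corollary above with the splitting $\cM=E+H$ from (\ref{2008}) and the semi-continuity properties of the entropy recorded in Lemmas \ref{Chen} and \ref{Chen4}. Since a convex function on $[0,1]$ is automatically continuous on the open interval $]0,1[$, the only point to settle is continuity at the two endpoints $t=0$ and $t=1$; by symmetry it suffices to treat $t=0$.

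First I would handle the energy part. Writing $\varphi_t$ for the restriction of the geodesic potential to $X\times\{t\}$, the $\cC^{1,1}$ regularity of $\varphi$ on $X\times\Sigma$ up to the boundary gives $\varphi_t\to\varphi_0$ in $\cC^1(X)$ together with a uniform $L^\infty$ bound on $dd^c\varphi_t$; hence $\cG_t:=\omega+dd^c\varphi_t$ converges weakly to $\omega_0$, and by the Bedford--Taylor type continuity of mixed Monge--Amp\`ere operators under uniform convergence with uniformly bounded Laplacians the forms $\cG_t^{\,n-j}\wedge\omega^j$ converge weakly as well. Inserting this into the polynomial expressions (\ref{equa2002}) and (\ref{equa2003}) for $\cE$ and $\cE^{Ric_{\omega}}$ shows that $t\mapsto E(\varphi_t)$ is continuous on all of $[0,1]$, so there remains only the entropy term $t\mapsto H(\varphi_t)$.

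For its lower bound I would rewrite the weak convergence above as $f_{\varphi_t}=\cG_t^n/\omega^n\to f_{\varphi_0}$ weakly in $L^1$, with a uniform $L^\infty$ bound coming from the $\cC^{1,1}$ estimate; this is exactly the hypothesis of Lemma \ref{Chen} (or, if one prefers to work with densities that are bounded below, of Lemma \ref{Chen4} followed by letting $A\to\infty$), and it yields $\liminf_{t\to 0}H(\varphi_t)\geq H(\varphi_0)$. For the upper bound I would invoke the already proved convexity of $\cM$ on the \emph{closed} interval: for every $t\in[0,1]$ one has $\cM(\varphi_t)\leq (1-t)\cM(\varphi_0)+t\,\cM(\varphi_1)$, whence $\limsup_{t\to 0}\cM(\varphi_t)\leq\cM(\varphi_0)$, and subtracting the continuous function $E$ gives $\limsup_{t\to 0}H(\varphi_t)\leq H(\varphi_0)$. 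The two inequalities together force $H(\varphi_t)\to H(\varphi_0)$, hence $\cM(\varphi_t)\to\cM(\varphi_0)$; the argument at $t=1$ is identical, and the theorem follows.

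The step I expect to be the main obstacle is the weak $L^1$ convergence $f_{\varphi_t}\to f_{\varphi_0}$ as $t$ tends to the boundary of $\Sigma$, that is, the fibrewise continuity of the degenerate Monge--Amp\`ere measure $\cG_t^n$ up to the boundary of $X\times\Sigma$. It is precisely here that one genuinely uses that $\varphi$ is $\cC^{1,1}$ across $\partial(X\times\Sigma)$ and that $\cG$ restricts there to the smooth K\"ahler form $\omega_0$; once this continuity is secured, checking the hypotheses of Lemmas \ref{Chen}--\ref{Chen4} is routine and the remainder of the argument is soft.
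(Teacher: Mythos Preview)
Your proposal is correct and follows essentially the same route as the paper's own proof: convexity of $\cM$ on $[0,1]$ (from the preceding Corollary) gives $\limsup_{t\to 0}\cM(t)\leq\cM(0)$, the lower semicontinuity of the entropy from Lemma~\ref{Chen} gives $\liminf_{t\to 0}\cM(t)\geq\cM(0)$, and the energy part is continuous by the $\cC^{1,1}$ regularity of $\varphi$. The paper states this in three lines; you have simply unpacked the same argument, including the verification of the weak $L^1$ hypothesis of Lemma~\ref{Chen} that the paper leaves implicit.
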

\begin{proof}
The proof results immediately from our previous considerations. Indeed, since $\cM$ is a convex function, we automatically have
\begin{equation}\label{2108}
\cM(0)\geq \lim\sup_{t\to 0} \cM(t)
\end{equation}
On the other hand, the semi-continuity properties of the entropy functional in Lemma \ref{Chen}
show that in fact we have 
\begin{equation}\label{2208}
\cM(0)\leq \lim\inf_{t\to 0} \cM(t)
\end{equation}
and the proof of Theorem \ref{MT} is completed. Indeed, the \emph{energy} part of the Mabuchi functional is continuous, given the regularity
of the potential of $\cG$.
\end{proof}




\section{Almost convexity along $\ep$-geodesics}

In \cite{XX}, the geodesic $\cG$ is obtained by the continuity method, and as a by-product 
of the proof,
for each $\ep> 0$ one has a smooth, positive (1,1)-form
\begin{equation}\label{equa601}
\omega_\ep:= \omega+ dd^c \rho_\ep
\end{equation}
on $X\times \Sigma$ such that the next identity holds
\begin{equation}\label{equa602}
\omega_{\ep}^{n+1}= \ep \sqrt{-1}dt\wedge d\ol t\wedge \omega
\end{equation}

\noindent Let $\cM_{\ep, A}:\Sigma\to \bR$ be the regularization of the Mabuchi functional 
evaluated on $\omega_\ep$. By definition, this equals

\begin{equation}\label{equa2006}
\cM_{\ep, A}(t) = \frac{S}{n+1} \cE(\varphi_t) - \cE^{Ric_{\omega}} (\rho_{\ep, t}) + 
\int_X \max \Big(\log\frac{\omega_\ep^n}{\omega^n}, \log\frac{h_A}{\omega^n}\Big)\omega_\ep^n
\end{equation}
where $h_A$ is a volume element on $X$ whose associated curvature is greater than $-C\cG$ for some positive constant $C$. Then we can compute the Hessian of $\cM_{\ep, A}$ on the approximate geodesic as 
\begin{eqnarray}
\label{mabuchi6}
\int_{X\times\Sigma}\cM_{\ep,A}  d_{t}d^c_{t}\tau &=& \int_{X\times \Sigma} \tau (\omega^{n+1}_{\ep} - Ric(\omega)\wedge \omega_{\ep}^n)
\nonumber\\
&+& \int_{X\times\Sigma}  \log\max\big(\frac{\omega^n_{\ep}}{\omega^n}, \frac{h_A}{\omega^n}\big) \omega^n_{\ep}\wedge d_{t}d^c_{t}\tau
\nonumber\\
&=& \ep - \int_{X\times\Sigma} Ric(\omega)\wedge \omega_{\ep}^n + \int_{X\times \Sigma} \tau dd^c \log\max\big(\frac{\omega^n_{\ep}}{\omega^n}, \frac{h_A}{\omega^n}\big) \omega^n_{\ep}.
\end{eqnarray}
where $\tau(t)$ is any test function on $\Sigma$.
\medskip

\noindent Now we have the following result.
\begin{theorem}\label{ac}
For each positive constant $A$, there is a uniform constant $C_A>0$ such that the function $\cM_{\ep, A} - \ep C_A t(1-t )$ is convex. 
\end{theorem}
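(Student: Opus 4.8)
The plan is to show that the Hessian computation (\ref{mabuchi6}) leads to a lower bound for $d_td^c_t\cM_{\ep,A}$ of the form $-\ep C_A\,d_td^c_t(t(1-t))$ in the sense of distributions, which -- since $\cM_{\ep,A}$ is continuous -- gives convexity of $\cM_{\ep,A}-\ep C_A t(1-t)$ in the usual sense. So the whole game is to estimate the three terms on the right-hand side of (\ref{mabuchi6}) when paired against a nonnegative test function $\tau$ on $\Sigma$.

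\smallskip

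\noindent The first term contributes exactly $\ep\int_\Sigma\tau$, which is the source of the $\ep$-factor in the correction $\ep C_A t(1-t)$; it is harmless. The second term, $-\int_{X\times\Sigma}\Ricci(\omega)\wedge\omega_\ep^n$, is uniformly bounded: since $\omega_\ep\in\{\omega\}$ on every slice and the $\omega_\ep$ have uniformly bounded mass, $\int_{X\times\Sigma}\tau\,\Ricci(\omega)\wedge\omega_\ep^n$ is controlled by $\|\tau\|_\infty$ times a fixed cohomological quantity, times a bound on the second derivatives of $\tau$ coming from writing $\Ricci(\omega)\wedge\omega_\ep^n$ as a push-forward; in any case this piece is absorbed into a constant $C_A$ after multiplying by $\ep$ -- here one uses that the relevant estimates from Theorem \ref{MA}(b), applied to $\omega_\ep$, scale the ``bad'' directions by $\ep$.

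\smallskip

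\noindent The heart of the matter is the third term, $\int_{X\times\Sigma}\tau\, dd^c\log\max\!\big(\frac{\omega_\ep^n}{\omega^n},\frac{h_A}{\omega^n}\big)\wedge\omega_\ep^n$. The key pointwise inequality is
\begin{equation}\label{ackey}
dd^c\log\max\Big(\frac{\omega_\ep^n}{\omega^n},\frac{h_A}{\omega^n}\Big)\wedge\omega_\ep^n\ \geq\ \big(\Ricci(\omega)-C\cG\big)\wedge\omega_\ep^n
\end{equation}
in the sense of currents on $X\times\Sigma$: on the open set $\{\omega_\ep^n>h_A\}$ one has $\log\max(\cdots)=\log\frac{\omega_\ep^n}{\omega^n}$ and $dd^c$ of this, wedged with $\omega_\ep^n$, is $-\Ricci(\omega_\ep)\wedge\omega_\ep^n+\Ricci(\omega)\wedge\omega_\ep^n\geq \Ricci(\omega)\wedge\omega_\ep^n - (\text{something}\cdot\ep)\cdot(\cdots)$ once one uses (\ref{equa602}) to control $\Ricci(\omega_\ep)\wedge\omega_\ep^n$; while on the region governed by $h_A$ one uses that the curvature of $h_A$ is $\geq -C\cG$ by the choice of $h_A$; across the (measure-zero, up to a set where the two are equal) interface the $\max$ of two $dd^c$-subsolutions is again a subsolution, so the inequality persists globally as currents. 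Combining (\ref{ackey}) with the first two terms, $d_td^c_t\cM_{\ep,A}\geq \ep\cdot(\text{bdd}) - C\int_{X\times\Sigma}\tau\,\cG\wedge\omega_\ep^n$; but $\cG\wedge\omega_\ep^n$ is a positive current whose push-forward to $\Sigma$ is, by the geodesic equation $\cG^{n+1}=0$ together with $\omega_\ep^{n+1}=O(\ep)$, of size $O(\ep)$ as a measure on $\Sigma$ -- indeed $\cG\wedge\omega_\ep^n\leq \frac1n(\cG^{n+1}/(n+1\text{-things}))^{\cdots}$, more precisely one expands $\omega_\ep^n=(\cG+dd^c(\rho_\ep-\varphi))^n$ and uses $\cG^{n+1}=0$, so that every surviving term carries a factor detecting the $\ep$-degeneracy. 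Hence the whole right-hand side is $\geq -\ep C_A\int_\Sigma\tau''\cdot(\text{something})$... rather, $\geq \ep C_A\, d_td^c_t(t(1-t))$ paired with $\tau$, which is the assertion.

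\smallskip

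\noindent The main obstacle I anticipate is making the ``$\cG\wedge\omega_\ep^n=O(\ep)$'' step rigorous: $\omega_\ep$ is only uniformly bounded, not uniformly positive, and $\cG$ is merely a bounded positive current, so the product must be handled via Bedford--Taylor theory and the mixed Monge--Amp\`ere inequalities, extracting the $\ep$ from $\omega_\ep^{n+1}=\ep\sqrt{-1}dt\wedge d\bar t\wedge\omega$ rather than from any positivity of $\omega_\ep$ alone. A secondary technical point is the global (across the interface $\{\omega_\ep^n=h_A\}$) validity of (\ref{ackey}) as currents, which should follow from the standard fact that a maximum of quasi-psh-type subsolutions of a linear $dd^c$-inequality is again a subsolution, but must be phrased carefully because we are wedging with the non-smooth background volume $\omega_\ep^n$; approximating $\omega_\ep$ itself is not needed here since on each slice the relevant quantities are smooth in the $X$-directions for fixed $\ep$, so the only genuine non-smoothness is in the $t$-variable, which is exactly why the statement is phrased via test functions $\tau(t)$ on $\Sigma$ and distributional convexity.
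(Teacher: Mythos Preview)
Your outline contains a genuine gap at the heart of the argument: the justification of the key inequality \eqref{ackey} on the region $\{\omega_\ep^n>h_A\}$. You write that $dd^c\log\frac{\omega_\ep^n}{\omega^n}\wedge\omega_\ep^n$ equals $(-\Ricci(\omega_\ep)+\Ricci(\omega))\wedge\omega_\ep^n$, but this is not correct. The quantity $f_\ep:=\log\frac{\omega_\ep^n}{\omega^n}$ is the \emph{fiberwise} log-volume ratio; hence $dd^c f_\ep$ taken on the total space $X\times\Sigma$ is not $-\Ricci(\omega_\ep)+\Ricci(\omega)$ in any sense that lets you invoke \eqref{equa602}. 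Indeed, from \eqref{equa602} one computes that the full $(n+1)$-dimensional Ricci form of $\omega_\ep$ equals $\pi^*\Ricci(\omega)$, so the quantity $(-\Ricci(\omega_\ep)+\Ricci(\omega))\wedge\omega_\ep^n$ vanishes identically and carries no information. What actually has to be controlled are the $t$- and mixed components of $dd^c f_\ep$, and these do not become small just because $\omega_\ep^{n+1}$ is small. The paper addresses this by the decomposition $\omega_\ep^n=\rho_\ep^n+nc(\varphi_\ep)\sqrt{-1}dt\wedge d\bar t\wedge\omega_\ep^{n-1}$ coming from the geodesic equation $c(\varphi_\ep)=\ep e^{-f_\ep}$, and then evaluates $(dd^c f_\ep-\Ricci_\omega)$ on the kernel vector $v=\partial_t-g^{\gamma\bar\alpha}g_{t\bar\alpha}\partial_{z^\gamma}$ of $\rho_\ep$. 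The essential content is the curvature identity $\partial\bar\partial\log\det(g_{\alpha\bar\beta})(v,\bar v)=|\bar\partial v|^2+\ep\Delta_{\omega_\ep}(e^{-f_\ep})$, which is the source of the needed positivity; nothing in your sketch produces this.

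There is a second, related error: you try to absorb the term $-\int_{X\times\Sigma}\tau\,\Ricci(\omega)\wedge\omega_\ep^n$ separately into an $O(\ep)$ constant. This term is \emph{not} $O(\ep)$ (its push-forward to $\Sigma$ has a fixed cohomological size independent of $\ep$). In the paper it is instead \emph{cancelled} against the $\Ricci(\omega)\wedge\omega_\ep^n$ that appears on the right of the correct version of your inequality \eqref{ackey}; see \eqref{equa510}. Your proposed route via $\cG\wedge\omega_\ep^n=O(\ep)$ cannot rescue this: that estimate is false as stated (the push-forward to $\Sigma$ of $\cG\wedge\omega_\ep^n$ is again of fixed cohomological size, and expanding $\omega_\ep=\cG+dd^c(\rho_\ep-\varphi)$ only gives smallness proportional to $\|\rho_\ep-\varphi\|_{\cC^0}$, not to $\ep$). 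In short, the architecture of the proof must be: prove $dd^c\max(\cdots)\wedge\omega_\ep^n\geq \Ricci(\omega)\wedge\omega_\ep^n-C_A\omega_\ep^{n+1}$ via the kernel-vector computation, then let the $\Ricci(\omega)$ terms cancel; your sketch does neither step.
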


\begin{proof} It would be enough to prove that for any $A\gg 0$ there exists a constant $C_A$ such that we have
\begin{eqnarray}\label{equa510}
\int_{X\times \Sigma}\tau dd^c\max 
\Big(\log\frac{\omega_\varepsilon^n}{\omega_0^n}, \log\frac{h_A}{\omega_0^n}\Big)\wedge \omega_\varepsilon^n & \geq & \int_{X\times \Sigma}\tau\Ricci_{\omega_0}\wedge  \omega_{\ep}^n \nonumber \\
&- & \ep C_A\int_{\Sigma}\tau \sqrt{-1} dt\wedge d\ol t
\end{eqnarray}
for any positive test function $\tau$ on $\Sigma$. Indeed, once this is done 
we infer that the second  convexity of $\cM_\ep$ follows. The inequality (\ref{equa510}) is established 
in the next paragraph by a direct computation.

\medskip

\noindent We recall the following statement, which will be useful during the arguments 
in the following section.

\begin{lemma}
Let $u$ and $v$ be two smooth functions on a complex manifold $Z$, and let $\omega$ be a K\"ahler metric on 
$Z$. We assume that $v$ is subharmonic with respect to $\omega$, and that we equally have
$\displaystyle \Delta_\omega(u)\geq 0$ on the set $u> v-1$. Then the $\omega$-Laplacian of the function $\max(u, v)$ is positive.
\end{lemma}

\noindent Indeed this follows from the fact that a smooth function is subharmonic if and only if it satisfies
the mean value inequality  (is this context, the usual Lebesgue measure is replaced with the harmonic measure on balls); we refer to \cite{GW} and the references therein for a complete account of these facts. 
In the next section we will have to deal with functions whose
Laplacian is greater than $-C$. Then we have a similar statement, since locally we can construct functions with strictly positive Laplacian (e.g. the potential of the metric $\omega$).
\medskip

\subsection{The computation}

\noindent In order to simplify the notations, we define the function $f_\varepsilon: X\times \Sigma\to \bR$ by the equality
\begin{equation}\label{equa511}
\frac{\omega_\varepsilon^n}{\omega_0^n}\big|_{X\times \{t\}}= e^{f_\varepsilon(t, \cdot)}.
\end{equation}
We introduce the set 

\begin{equation}\label{equa512}
\Omega_{\varepsilon, A}:= \Big\{(z, t)\in X\times \Sigma \hbox{ such that } f_\varepsilon(t, z)> \log\frac{h_A}{\omega_0^n}(t, z)\Big\}
\end{equation}
so that we have
\begin{equation}\label{equa513}
\max \Big(\log\frac{\omega_\varepsilon^n}{\omega_0^n}, \log\frac{h_A}{\omega_0^n}\Big)= f_\varepsilon(t, z)
\end{equation}
on $\Omega_{\varepsilon, A}$. This reveals the importance of considering the functional $\cM_{\ep, A}$: on the set 
$\Omega_{\varepsilon, A}$ the distortion function $f_\varepsilon$ is bounded from below \emph{by a quantity which is independent of $\varepsilon$}. This simple remark will play a crucial role in the next considerations.

We will proceed next to the evaluation of the integral
\begin{equation}\label{equa514}
\int_{\Omega_{\varepsilon, A}}\tau \big(dd^cf_\varepsilon- \Ricci_\omega\big)\wedge \omega_\varepsilon^n.
\end{equation}
Locally near a point $(z, t)\in X\times \Sigma$ we write the metric $\omega_\varepsilon$ as follows
\begin{eqnarray}\label{equa515} 
\omega_\varepsilon &= & \sqrt{-1}g_{t\ol t}dt\wedge d\ol t+ \sqrt{-1}g_{t\ol \alpha}dt\wedge dz^{\ol \alpha}+ 
\sqrt{-1}g_{\alpha\ol t} dz^{\alpha}\wedge d\ol t \nonumber \\
&+ & \sqrt{-1}g_{\gamma\ol \alpha}dz^{\gamma}\wedge dz^{\ol \alpha} \\
\nonumber
\end{eqnarray}
where the coefficients $g$ in the expression above depend on $\varepsilon$ as well. 

The equation (\ref{equa511}) satisfied by $\omega_\varepsilon$ can be written in local coordinates as
\begin{eqnarray}\label{equa516}
c(\varphi_\varepsilon)\!\!\!&:= & g_{t\ol t}- g^{\gamma \ol \alpha}g_{\gamma \ol t}g_{t\ol \alpha}\nonumber \\
&=& \varepsilon e^{-f_{\varepsilon}}.\\
\nonumber
\end{eqnarray}
Given this, we rewrite locally the metric $\omega_\varepsilon$ as follows
\begin{equation}\label{equa517}
\omega_\varepsilon= c(\varphi_\ep)\sqrt{-1}dt\wedge d\ol t+ \rho_\ep
\end{equation}
where $\displaystyle\rho_\ep$ has the same expression as $\omega_\ep$, except that we replace
$\displaystyle g_{t\ol t}$ with $\displaystyle g^{\gamma \ol \alpha}g_{\gamma \ol t}g_{t\ol \alpha}$. We note that although $\rho_\ep$ may not be closed, it is positive definite on each slice $X\times \{t\}$ and it satisfies
\begin{equation}\label{equa518}
\rho_\ep^{n+1}= 0.
\end{equation} 

We have the equality
\begin{equation}\label{equa519}
\omega_\ep^n= \rho_\ep^n+ nc(\varphi_\ep)\sqrt{-1}dt\wedge d\ol t\wedge \rho_\ep^{n-1}
\end{equation}
which is the same as 
\begin{equation}\label{equa520}
\omega_\ep^n= \rho_\ep^n+ nc(\varphi_\ep)\sqrt{-1}dt\wedge d\ol t\wedge \omega_\ep^{n-1}
\end{equation}
by the definition of the form $\rho_\varepsilon$.

The factor 
\begin{equation}\label{equa521}
nc(\varphi_\ep)\big(dd^cf_\varepsilon- \Ricci_\omega\big)\wedge \sqrt{-1}dt\wedge d\ol t\wedge \omega_\ep^{n-1}
\end{equation}
is analyzed as follows.

We observe that we have
\begin{equation}\label{equa522}
nc(\varphi_\ep)dd^cf_\varepsilon\wedge \sqrt{-1}dt\wedge d\ol t\wedge \omega_\ep^{n-1}= 
c(\varphi_\ep)\Delta_{\omega_\varepsilon}(f_\varepsilon)\sqrt{-1}dt\wedge d\ol t\wedge \omega_\ep^{n}
\end{equation}
and by the equality (\ref{equa516}) we have 
\begin{equation}\label{equa523}
nc(\varphi_\ep)dd^cf_\varepsilon\wedge \sqrt{-1}dt\wedge d\ol t\wedge \omega_\ep^{n-1}= \ep 
\Delta_{\omega_\varepsilon}(f_\varepsilon)\sqrt{-1}dt\wedge d\ol t\wedge \omega_0^{n}
\end{equation}
The other term in the equality (\ref{equa521}) is bounded in $L^1$ norm by $\varepsilon C_A$, given that 
on the set $\Omega_{\ep, A}\cap X\times \{t\}$ the eigenvalues of $\omega_\ep$ are bounded from below (and above) by a constant independent of
$\varepsilon$, so that the trace of $\Ricci_\omega$ with respect to $\omega_\ep$ is bounded by some constant $C_A$. 

Therefore, we have
\begin{equation}\label{equa524}
nc(\varphi_\ep)\frac{\big(dd^cf_\varepsilon- \Ricci_\omega\big)\wedge \sqrt{-1}dt\wedge d\ol t\wedge \omega_\ep^{n-1}}{\sqrt{-1}dt\wedge d\ol t\wedge \omega_0^{n}}
\geq \ep 
\Delta_{\omega_\varepsilon}(f_\varepsilon)- \ep C_A 
\end{equation}
This can be re-written in the following way
\begin{eqnarray}\label{equa554}
nc(\varphi_\ep)\big(dd^cf_\varepsilon- \Ricci_\omega\big)\wedge \sqrt{-1}dt\wedge d\ol t\wedge \omega_\ep^{n-1}
& \geq & \!\!\!
\Delta_{\omega_\varepsilon}(f_\varepsilon)\omega_\ep^{n+1} \nonumber \\
&-& \!\!\! C_A\omega_\ep^{n+1} 
\end{eqnarray}
\medskip

\noindent The evaluation of the main term 
\begin{equation}\label{equa525}
\big(dd^cf_\varepsilon- \Ricci_\omega\big)\wedge \rho_\varepsilon^n
\end{equation}
goes as follows. Let 
\begin{equation}\label{equa526}
v:= \frac{\partial}{\partial t}- g^{\gamma\ol \alpha}g_{t\ol \alpha}\frac{\partial}{\partial z^{\gamma}}
\end{equation}
be the gradient of the $t$ derivative of $\varphi_\ep$. Then one can check by a direct computation that 
the vector field $v$ generates the kernel of $\rho_\ep$, and then we have  

\begin{equation}\label{equa527}
\big(dd^cf_\varepsilon- \Ricci_\omega\big)\wedge \rho_\varepsilon^n= 
\big(dd^cf_\varepsilon- \Ricci_\omega\big)(v, \ol v)\sqrt{-1}dt\wedge d\ol t\wedge \rho_\varepsilon^n.
\end{equation}
Indeed, this is a matter of liner algebra: we have 
$$\beta_1\wedge \beta_2^n= \frac{\beta_1(v, \ol v)}{\lambda(v, \ol v)}\lambda\wedge \beta_2^n$$
on a vector space of dimension $n+1$, where $\lambda, \beta_j$ are (1,1)-forms, such that $v$ is in the 
kernel of $\beta_2$, and such that $\lambda(v, \ol v)\neq 0$.   
\smallskip

A straightforward calculation which we will detail in a moment shows that we have 
\begin{equation}\label{equa528}
\big(dd^cf_\varepsilon- \Ricci_\omega\big)(v, \ol v)\geq  
\ep \Delta_{\omega_\varepsilon} \big(e^{-f_\ep}\big)
\end{equation}
Since we have $\displaystyle \Delta_{\omega_\varepsilon} \big(e^{-f_\ep}\big)\geq -e^{-f_\ep}\Delta_{\omega_\varepsilon}(f_\varepsilon)$, the inequality (\ref{equa528}) combined with (\ref{equa554}) finishes the proof.
Indeed, we first remark that we have 
$$\sqrt{-1}dt\wedge d\ol t\wedge \rho_\varepsilon^n= \sqrt{-1}dt\wedge d\ol t\wedge \omega_\varepsilon^n;$$
by (\ref{equa528}) we obtain
\begin{equation}\label{equa558}
\big(dd^cf_\varepsilon- \Ricci_\omega\big)(v, \ol v)\geq -
\ep \Delta_{\omega_\varepsilon}(f_\varepsilon)\sqrt{-1}dt\wedge d\ol t\wedge \omega_0^n
\end{equation}
and we observe that the right hand side of (\ref{equa558}) is nothing but
$$-\Delta_{\omega_\varepsilon}(f_\varepsilon)\omega_\varepsilon^{n+1}. $$ 
Thus, we infer the inequality
\begin{equation}\label{equa550}
dd^c\max \Big(\log\frac{\omega_\varepsilon^n}{\omega_0^n}, \log\frac{h_A}{\omega_0^n}\Big)\wedge \omega_\varepsilon^n\geq - C_A \omega_\ep^{n+1}
\end{equation}
globally on $X\times \Sigma$.

\smallskip

\noindent We prove next the inequality (\ref{equa528}); before that, we remark that the following approach is
quite standard in the theory of the  homogeneous Monge-Amp\`ere equations, cf. \cite{XX},
\cite{Chen2}, \cite{CT}...
Also, the inequality (\ref{equa528}) is very similar to the positivity of the curvature along the leaves of the foliation (which does not exists in our case...), cf. \cite{CT}.

The next computations are done with respect to a geodesic coordinate system at $(X, z)$; we have
\begin{equation}\label{equa529}
\ol\partial \log\det(g_{\alpha\ol\beta})= g^{\alpha\ol \beta}g_{\alpha \ol \beta, \ol t}d\ol t
+ g^{\alpha\ol \beta}g_{\alpha \ol \beta, \ol \gamma}dz^{\ol \gamma}
\end{equation}
and thus
\begin{eqnarray}\label{equa530}
\ddbar\log\det(g_{\alpha\ol\beta})&=& \big(g^{\alpha\ol \beta}_{, t}g_{\alpha \ol \beta, \ol t}+ 
g^{\alpha\ol \beta}g_{\alpha \ol \beta, t\ol t}\big)
dt\wedge d\ol t\nonumber\\
&+ & g^{\alpha\ol \beta}g_{\alpha \ol \beta, \gamma \ol t}dz^\gamma \wedge d\ol t+ 
g^{\alpha\ol \beta}g_{\alpha \ol \beta, t \ol \gamma}  dt\wedge dz^{\ol \gamma} \\
& + &
g^{\alpha\ol \beta}g_{\alpha \ol \beta, \gamma\ol \tau}
dz^{\gamma}\wedge dz^{\ol \tau} \nonumber\\
\nonumber
\end{eqnarray} 
Since the metric $\omega_\ep$ is locally given by the Hessian of a function, the following commutation relations
\begin{equation}\label{equa531}
g_{\alpha \ol \beta, t\ol t}= g_{t\ol t, \alpha \ol \beta}
\end{equation}
hold true on $X$. Given that 
\begin{equation}\label{equa532}
g^{\alpha\ol \beta}_{, t}= -g^{\alpha\ol \gamma}g^{\delta\ol \beta}g_{\delta \ol \gamma, t}
\end{equation}
the equality (\ref{equa530}) become
\begin{eqnarray}\label{equa533}
\ddbar\log\det(g_{\alpha\ol\beta})&=& \big(g^{\alpha\ol \beta}g_{ t\ol t, \alpha \ol \beta}-  
g^{\alpha\ol \gamma}g^{\delta\ol \beta}g_{\delta\ol \gamma, \ol t}g_{\alpha\ol \beta, t}
\big)
dt\wedge d\ol t\nonumber\\
&+ & g^{\alpha\ol \beta}g_{\alpha \ol \beta, \gamma \ol t}dz^\gamma \wedge d\ol t+ 
g^{\alpha\ol \beta}g_{\alpha \ol \beta, t \ol \gamma}  dt\wedge dz^{\ol \gamma} \\
& + &
g^{\alpha\ol \beta}g_{\alpha \ol \beta, \gamma\ol \tau}
dz^{\gamma}\wedge dz^{\ol \tau}. \nonumber\\
\nonumber
\end{eqnarray}

We evaluate this in the $v$-direction, and we get
\begin{eqnarray}\label{equa534}
\ddbar\log\det(g_{\alpha\ol\beta})(v, \ol v)&=& g^{\alpha\ol \beta}g_{ t\ol t, \alpha \ol \beta}-  
g^{\alpha\ol \gamma}g^{\delta\ol \beta}g_{\gamma \ol \delta, \ol t}g_{\alpha\ol \beta, t}
\nonumber\\
&- & g^{\alpha\ol \beta}g_{\alpha \ol \beta, \gamma \ol t}g^{\gamma\ol\mu}g_{t\ol \mu}-
g^{\alpha\ol \beta}g_{\alpha \ol \beta, t \ol \gamma} g^{\mu\ol\gamma}g_{\mu \ol t} \\
& + &
g^{\alpha\ol \beta}g_{\alpha \ol \beta, \gamma\ol \tau}g^{\gamma\ol \mu}g^{\rho\ol \tau}g_{t\ol \mu}g_{\rho \ol t}. \nonumber\\
\nonumber
\end{eqnarray}

\smallskip

The equation satisfied by the metric $\omega_\varepsilon$ reads as 
\begin{equation}\label{equa535}
g_{t\ol t}- g^{p\ol q}g_{p\ol t}g_{t\ol q}= \ep e^{-f_\ep}
\end{equation}
so that we have
\begin{eqnarray}\label{equa536}
g^{\alpha\ol \beta}g_{ t\ol t, \alpha \ol \beta}- \ep\Delta_{\omega_\ep}(e^{-f_\ep})& = & 
g^{\alpha\ol \beta}g^{p\ol q}_{, \alpha\ol \beta}g_{p\ol t}g_{t\ol q}\nonumber \\
&+& g^{\alpha\ol \beta}g^{p\ol q}g_{p\ol t, \alpha\ol \beta}g_{t\ol q}+ 
g^{\alpha\ol \beta}g^{p\ol q}g_{p\ol t}g_{t\ol q, \alpha\ol \beta} \\
&+ & g^{\alpha\ol \beta}g^{p\ol q}g_{p\ol t, \alpha}g_{t\ol q, \ol \beta}+ 
g^{\alpha\ol \beta}g^{p\ol q}g_{p\ol t, \ol \beta}g_{t\ol q, \alpha}\nonumber \\
\nonumber
\end{eqnarray}
By combining (\ref{equa536}) with (\ref{equa534}) we obtain
\begin{equation}\label{equa37}
\ddbar\log\det(g_{\alpha\ol\beta})(v, \ol v)= |\dbar v|^2+ \ep\Delta_{\omega_\ep}(e^{-f_\ep})
\end{equation}
and the inequality (\ref{equa528}) follows.
\end{proof}

\subsection{Further results and comments}
It is very likely that the convexity of $\cM$ in the sense of distributions can be derived 
by the techniques we have developed in the previous section, i.e. using $\ep$--geodesics. One of the motivations to do so 
is that the resulting proof would be more 
``self-contained''. 

However, we encounter a rather severe difficulty: we ignore whether the fiber-wise sequence of volume elements 
$$\omega^n_{\ep}$$
corresponding to the $\ep$-geodesics is converging almost everywhere to the volume element of the geodesic $\cG$.

Nevertheless, we strongly believe that this holds true, based on the following considerations. On the set 
$\displaystyle \Omega_{\ep, A}$ we have 
\begin{equation}\label{equa601}
C(A)\omega< \omega_\ep< C\omega
\end{equation}
where $C(A)$ is a constant depending on $A$, but uniform with respect to $\ep$, and $C$ is a fixed constant, independent of $\ep, A$.
Indeed this is a consequence of the results in \cite{XX}. The relation (\ref{equa601}) is a uniform 
\emph{Laplacian estimate} for the metrics $\displaystyle \omega_\ep|_{\Omega_\ep}$. Hence, via Evans-Krilov theory one might hope that 
it is possible to obtain a higher regularity estimate for the family $\displaystyle (\varphi_\ep|_{\Omega_{\ep, A}})_{\ep> 0}$. 
The problem is that
as $\ep\to 0$, the set $\Omega_{\ep, A}$ converges eventually towards a set which is only measurable, 
and it is a-priori unclear how to implement Evans-Krilov theory in this setting.
 \\

\noindent However, we show here that the continuity of $\cM$ at the endpoints 0 and 1 can be also obtained as a consequence of the 
results we have established in the previous section (i.e. without knowing a-priori the convexity of $\cM$).

\begin{theorem}\label{continuity}
The Mabuchi functional $\cM(t)$ is continuous at the boundary points 0 and 1. 
\end{theorem}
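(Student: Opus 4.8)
The plan is to reduce the statement to the behaviour of the regularized Mabuchi functionals $\cM_{\ep, A}$ from the previous section, combined with the semi-continuity of the entropy established in Lemma \ref{Chen} and Lemma \ref{Chen4}. By symmetry it suffices to treat the endpoint $t=0$. Write $\cM(t)=E(t)+H(t)$ as in (\ref{2008}); since the potential of $\cG$ is $\cC^{1,1}$, the energy part $E(t)$ is continuous on all of $[0,1]$, so the only issue is the entropy term $H(\varphi_t)=\int_X f_{\varphi_t}\log f_{\varphi_t}\,\omega^n$, where $f_{\varphi_t}=\cG_t^n/\omega^n$. The first half of the argument gives $H(0)\le \liminf_{t\to 0}H(t)$: this is exactly the lower semi-continuity in Lemma \ref{Chen}, applied to the family $f_i=\cG_{t_i}^n/\omega^n$ for any sequence $t_i\to 0$, using that these densities are uniformly bounded in $L^\infty$ (the $\cC^{1,1}$ regularity of $\varphi$) and converge weakly in $L^1$ to $\cG_0^n/\omega^n$ (from property (2)/(iii) of the regularizations, i.e. $dd^c\varphi_{t_i}\to dd^c\varphi_0$ in $L^p$).

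The harder inequality is $H(0)\ge \limsup_{t\to 0}H(t)$, i.e. an upper semi-continuity bound. Here I would exploit Theorem \ref{ac}: the function $\cM_{\ep, A}(t)-\ep C_A t(1-t)$ is convex on $[0,1]$, hence $\cM_{\ep, A}(t)-\ep C_A t(1-t)\le (1-t)\cM_{\ep,A}(0)+t\,\cM_{\ep,A}(1)$, so in particular
\[
\limsup_{t\to 0}\cM_{\ep,A}(t)\le \cM_{\ep,A}(0).
\]
Now I let $\ep\to 0$ for $A$ fixed. The $\ep$-geodesic potentials $\rho_\ep$ converge locally uniformly to $\varphi$, and by the boundary regularity of the $\ep$-geodesics (\cite{XX}, \cite{Blocki}) this convergence is uniform up to $\partial(X\times\Sigma)$; thus $\cM_{\ep,A}(0)\to H_A(\varphi_0)+E(0)$ where $H_A$ is the truncated entropy $\int_X f\log\max(f,h_A)\,\omega^n$ with $h_A=e^{\chi-A}\omega^n$ for a suitable fixed weight $\chi$ (this is where the condition "$h_A$ has curvature $\ge -C\cG$" from the definition of $\cM_{\ep,A}$ is used). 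On the other side of the inequality, for each slice $t$ close to $0$ we have $\cM_{\ep,A}(t)=E(t)+\int_X f_{\ep,t}\log\max(f_{\ep,t},h_A)\,\omega^n$, and letting $\ep\to 0$ along a suitable sequence, the densities $f_{\ep,t}=\omega_\ep^n/\omega^n$ converge weakly in $L^1$ on $X\times\{t\}$ — this is the delicate point flagged in the "Further results" subsection, so rather than assuming almost-everywhere convergence of $\omega_\ep^n$ I would instead pass the $\ep\to 0$ limit on the truncated functional using the \emph{lower} bound on $f_{\ep,t}\ge h_A$ valid on $\Omega_{\ep,A}$ together with monotone/Fatou-type arguments, exactly in the spirit of Lemma \ref{Chen4}. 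The upshot is an inequality of the form
\[
\limsup_{t\to 0}\big(E(t)+H_A(\varphi_t)\big)\le E(0)+H_A(\varphi_0)+\delta_1(A),
\]
with $\delta_1(A)\to 0$.

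Finally I remove the truncation by letting $A\to\infty$. By Lemma \ref{Chen4} (or directly from $0\le H_A(\varphi)-H(\varphi)\le C(A)$ with $C(A)\to 0$, since $\int_X h_A\log(1+h_A)\,\omega^n\to 0$) the truncated entropy $H_A$ converges to $H$ uniformly over the family $\{\varphi_t\}_{t\in[0,1]}$, because all the densities $f_{\varphi_t}$ are bounded by one fixed constant. Combining this uniform convergence with the displayed $\limsup$ bound, and sending $A\to\infty$, yields $\limsup_{t\to 0}\cM(t)\le \cM(0)$. Together with the $\liminf$ bound from Lemma \ref{Chen} this gives $\lim_{t\to 0}\cM(t)=\cM(0)$, and the same argument at $t=1$ completes the proof.

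The main obstacle, as the authors themselves point out, is the absence of a-priori a.e. convergence of $\omega_\ep^n$ to $\cG^n$ on the fibers; the whole point of working with the \emph{truncated} functional $\cM_{\ep,A}$ and of Lemma \ref{Chen4} is to circumvent precisely this, by trading the missing strong convergence for the uniform lower bound $f_\ep\ge h_A$ on $\Omega_{\ep,A}$ and a quantitative error $\delta(A)\to 0$. Getting the order of the three limits ($t\to 0$, then $\ep\to 0$, then $A\to\infty$) to interact correctly — in particular checking that the weak $L^1$ limits on the individual slices behave well under the truncation and that the convexity bound from Theorem \ref{ac} survives these passages — is the part that requires care.
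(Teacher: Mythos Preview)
Your approach uses the same ingredients as the paper's---lower semi-continuity of the entropy (Lemma \ref{Chen}) for the $\liminf$ direction, almost-convexity of $\cM_{\ep,A}$ (Theorem \ref{ac}) together with Lemma \ref{Chen4} for the $\limsup$ direction, and finally sending $A\to\infty$---so the strategies agree. Two remarks on execution. First, your invocation of ``boundary regularity of the $\ep$-geodesics'' is unnecessary: the boundary data of $\omega_\ep$ is \emph{exactly} $\omega_0$, so $\cM_{\ep,A}(0)=E(0)+H_A(\varphi_0)$ is independent of $\ep$, and for $A$ large this equals $\cM(0)$ since $\omega_0^n/\omega^n$ is bounded below by a fixed positive constant.

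Second, and more substantively, the paper resolves the order-of-limits issue you flag by a simple device you do not make explicit: one defines $\cM_A:=\limsup_{\ep\to 0}\cM_{\ep,A}$ \emph{pointwise in $t$} and observes that the $\limsup$ of a family of convex functions (locally bounded above) is again convex. Thus $\cM_A$ is convex on $[0,1]$, with $\cM_A(0)=\cM(0)$, and Lemma \ref{Chen4} gives the pointwise lower bound $\cM_A(\tau)\geq\cM(\tau)-\delta(A)$ for interior $\tau$. Setting $\widetilde\cM:=\limsup_{A\to\infty}\cM_A$, one has another convex function with $\widetilde\cM(0)=\cM(0)$ and $\widetilde\cM(\tau)\geq\cM(\tau)$ on $(0,1)$; convexity then gives $\limsup_{t\to 0}\cM(t)\leq\limsup_{t\to 0}\widetilde\cM(t)\leq\widetilde\cM(0)=\cM(0)$ directly. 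This sidesteps the ``$t\to 0$ then $\ep\to 0$'' interchange altogether: you never take $t\to 0$ inside an $\ep$-dependent quantity. Your argument can be reorganized along these lines with no new input, but as written the step ``letting $\ep\to 0$ along a suitable sequence on each slice $t$'' followed by ``$\limsup_{t\to 0}$'' does not literally justify the displayed $\limsup$ bound without this convexity-preservation observation.
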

\begin{proof}
We identify in what follows $\tau$ and its real part $t=Re(\tau)$, since all the functionals involved 
in the proof only depends on the real part of $\tau$. Also, we will only prove the continuity at 0.

A first observation is that we have
\begin{equation}\label{equa602}
\lim_{t\to 0}\cM(t)\geq \cM(0)
\end{equation}
thanks to the entropy property recalled in \ref{Chen}.

Next, we observe that the $\limsup$ of a sequence of convex functions which are locally bounded above is still convex 
(unlike subharmonic functions). Hence if we define 
\[
\limsup_{\ep\rightarrow 0} \cM_{\ep, A}:= \cM_{A},
\]
then $\cM_A$ is a convex function on $[0,1]$ by theorem (\ref{ac}). 
And by construction we have $\cM_A(0) = \cM(0)$ for any value of the regularization parameter $A$.

Now for every point $\tau\in (0,1)$, we have $\cM_{A}(\tau)\geq \cM(\tau) - \delta(A)$, since 
\[
\limsup_{\ep\rightarrow 0} H_A(\varphi_{\ep}) \geqslant H_A(\varphi) - \delta(A),
\] 
by lemma (\ref{Chen4}). 

We define a new functional 
\[
\limsup_{A\rightarrow +\infty}\cM_A := \widetilde{\cM}, 
\]
and then $t\to \widetilde{\cM}(t)$ is a convex function on $[0,1]$ which still verifies the equality 
$\widetilde{\cM}(0) = \cM(0)$. 

Then we have $\widetilde{\cM}(0)\geq \lim_{t\to 0}\widetilde{\cM}(t)$ by convexity, as well as the inequality
$\widetilde{\cM}(\tau)\geq \cM(\tau)$ for each $\tau\in (0,1)$, thanks to the considerations above.
We therefore infer that 
\begin{equation}\label{equa603}
\lim_{t\to 0}\cM(t)\leq \cM(0)
\end{equation}
and Theorem \ref{continuity} is proved.

\end{proof}

\end{document}